\documentclass{amsart}
 
\usepackage[margin=1in]{geometry}
\usepackage{amssymb}
\usepackage{ytableau}
 \usepackage{mathtools}
 \usepackage{graphicx}
 \usepackage{pifont}
 \newcommand{\ai}{a_i}
 \newcommand{\aj}{a_j}
 \newcommand{\bi}{b_i}
 \newcommand{\bj}{b_j}
 \newcommand{\bk}{b_k}
 \DeclareMathOperator{\stab}{Stab}

 \usepackage{multicol}
\usepackage{mathrsfs}
\usepackage{color}
\usepackage{multicol}
\usepackage{amsthm}
\usepackage{float}
\usepackage{verbatim} 
\usepackage[vcentermath]{youngtab}
\usepackage{tikz}
\usepackage{tikz-cd}
\usetikzlibrary{arrows}
\usetikzlibrary{shapes.arrows}   
\usetikzlibrary{positioning}
\usepackage{mathrsfs}
\usepackage{amssymb}
\usepackage{mathtools}
\usepackage{python}

\setlength{\columnsep}{-2cm}

\definecolor{1}{rgb}{1,0.2,0.3}
\definecolor{2}{rgb}{0.1,0.3,0.5}
\definecolor{3}{rgb}{1,1,0}
\definecolor{4}{rgb}{255,255,255}


\newcommand{\lm}{\lambda}

\newtheorem{theorem}{Theorem}[section]
\newtheorem{corollary}[theorem]{Corollary}
\newtheorem{lemma}[theorem]{Lemma}
\newtheorem{conjecture}[theorem]{Conjecture}
\newtheorem{proposition}[theorem]{Proposition}
\newtheorem{definition}[theorem]{Definition}

\newenvironment{manualtheorem}[1]{%
  \manualtheoreminner
}{\endmanualtheoreminner}

\newenvironment{manualconjecture}[1]{%
 \manualconjectureinner
}{\endmanualconjectureinner}

\theoremstyle{remark}
\newtheorem{remark}[theorem]{Remark}

\usepackage{caption}
\usepackage{subcaption}

\begin{document}

\tikzset
{
  x=.23in,
  y=.23in,
}

\title{Isotopy graphs of Latin tableaux}

\author{R.Karpman}
\address[R. Karpman]{Department of Mathematics \\ Otterbein University}
\email{karpman1@otterbein.edu}

\author{\'E. Rold\'an}
\address[\'E. Rold\'an]{Zentrum Mathematik,
Technische Universit\"at M\"unchen}
\email{erika.roldan@ma.tum.del}

\thanks{The second author received funding from the European Union's Horizon 2020 research and innovation program under the Marie Sk\l odowska-Curie grant agreement No.~754462.}

\subjclass[2020]{05B15, 05C25, 05C75, 05E18}
\keywords{Latin square, Young tableau, Latin tableau, isotopy, isotopy classes, Schreier coset graph}
\date{\today}
\maketitle

\graphicspath{{Figures/}}

\begin{abstract}
Latin tableaux are a generalization of Latin squares, which first appeared in the early 2000's in a paper of Chow, Fan, Goemans, and Vondr\'{a}k.  Here, we extend the notion of isotopy, a permutation group action, from Latin squares to Latin tableaux.  We define isotopy graphs for Latin tableaux, which encode the structure of orbits under the isotopy action, and investigate the relationship between the shape of a Latin tableau and the structure of its isotopy graph.  Our main result shows that for any positive integer $d$, there is a Latin tableau whose isotopy graph is a $d$-dimensional cube. We show that most isotopy graphs are triangle-free, and we give a characterization of all the Latin tableaux for which the isotopy graph contains a triangle. We also establish that each connected component of an isotopy graph is regular, and give a formula for the degree of each vertex in a connected component of an isotopy graph which depends on both the shape of the tableau and the filling.
\end{abstract}

\maketitle{}

\section{Introduction}

A \emph{Latin square} is a square array of boxes filled with entries in the set $\{1,2,\ldots,n\}$, such that no entry appears twice in the same row or twice in the same column. Latin squares have long been an object of fascination for mathematicians. The first published example of a Latin square is due to a Korean mathematician, Choi Seokjeong, in 1700 \cite{San14}. In this note, we explore the combinatorics of \emph{Latin tableaux}, a generalization of Latin squares introduced by Timothy Chow and Brian Taylor \cite{CFGV02}.

Let $\lm = (\lm_1,\lm_2,\ldots,\lm_n)$ be a non-increasing sequence of positive integers--that is, a \emph{partition}. A \emph{Young diagram} of shape $\lm$ is an array of boxes arranged in left-justified rows, whose $i^{th}$ row has length $\lm_i$.  A \emph{Latin tableaux} is an assignment of integer entries to the boxes of a Young diagram such that row $i$ contains entries $1,2,\ldots,\lm_i$ in some order, and no entry appears twice in any column.  

Attempts to count $n \times n$ Latin squares for small $n$ date back at least to Euler in second half of the $18^{th}$ century.  See \cite{MMM07} for a historical account. There are several published formulas for the number of $n \times n$ Latin squares, although none that can be easily computed for large $n$. McKay and Wanless, for example, give an elegant formula in terms of permanents of matrices with entries in $\{-1, 1\}$ \cite{MW05}.

Two Latin squares are \emph{isotopic} if one can be obtained from the other by permuting rows, permuting columns, and/or permuting entries. Hence we may ask how many distinct \emph{isotopy classes} of $n \times n$ Latin squares exist for each $n$. McKay, Meynert and Myrvold computed the number of isotopy classes of $n \times n$ Latin squares for $n$ up to $10$ \cite{MMM07}.  Rather than constructing representatives for each class, they devised an approach which only required them to generate representatives for classes containing Latin squares with nontrivial symmetries. More recently, Hulpke, Kaski, and \"{O}sterg\r{a}rd counted isotopy classes for Latin squares of order 11, also using a blend of constructive and non-constructive techniques \cite{HKO11}. In \cite{MW05}, McKay and Wanless prove that as $n$ increases, the proportion of Latin squares of order $n$ with nontrivial symmetries rapidly approaches zero.

A \emph{partial Latin rectangle} is a rectangular array of boxes and a filling of some (but perhaps not all) of the boxes with positive integer entries, such that no entry appears twice in any row or column. We may think of Latin tableaux as partial Latin rectangles whose non-empty  boxes form the shape of a Young diagram. Isotopies of partial Latin rectangles are defined analogously to isotopies of Latin squares.

In \cite{falcon2019enumerating}, Falc\'{o}n and Stones study the enumeration of some sets of partial Latin rectangles which include Latin squares and Latin tableaux of a specific size. For fixed natural numbers $r,s,n$, and $m$ the set $PLR(r,s,n;m)$ of partial Latin rectangles contains all possible fillings of a Young diagram with rectangular shape $r \times s$  that are filled with $m$ entries from $[n]$, which denotes the set of integers from $1$ to $n$.

Falc\'{o}n and Stones compute the order of $PLR(r,s,n;m)$ for all $r \leq s \leq n \leq 6$.  In addition, they find the number of isotopy classes in $PLR(r,s,n:m)$ for each of these values of $r,s,n$ and $m$ using methods from algebraic geometry, together with an analysis of conjugacy classes in the group of isotopies acting on partial Latin rectangles of a given size \cite{falcon2019enumerating}.

\begin{figure}[ht]
    \centering
    \[
    \young(312,123,231) \hspace{1in} \young(3421,4312,123,21)
    \]
    \label{fig:tableaux}
    \caption{Examples of Latin tableaux}
\end{figure}

In this paper, we investigate the isotopy equivalence relation on Latin tableaux.  For a partition $\lm$, the \emph{isotopy graph} $\mathscr{G}(\lm)$ has as its vertices all Latin tableaux of shape $\lm$. Two vertices (tableaux) are connected by an edge if one can be obtained from the other by permuting a pair of rows, columns, or entries. Connected components in the isotopy graph correspond to isotopy classes, and the structure of the isotopy graph encodes the action of a permutation group, the \emph{isotopy group}, on the set of Latin tableaux of shape $\lm$.  The isotopy graph of a Latin tableau T of shape $\lambda$ is the connected component of $\mathscr{G}(\lm)$ containing $T$. In this paper, we explore the structure of isotopy graphs.

To the best of our knowledge, isotopy graphs have not been studied for Latin squares.  However, as explained in Remark \ref{coset}, the size of the isotopy graph of a Latin tableau is determined by the tableau's \emph{autotopy group}--that is, by the group of isotopies which fix the tableau.  Autotopy groups of Latin squares have been studied by several authors.  See, for example, the discussion and references in \cite{Sto13}. 

As $n$ increases, isotopy graphs for $n \times n$ Latin squares quickly become unmanageable in their size and complexity.  For tableaux of other shapes, however, we may obtain graphs whose structure is not only tractable but elegant. The heart of this paper is Section \ref{section:cube}, which deals with isotopy graphs isomorphic to cubes. Theorem \ref{theorem:cube} states that for any positive integer $d$, there is a Latin tableau whose isotopy graph is isomorphic to a $d$-dimensional cube, while Theorem \ref{cube_criterion} gives a characterization of the Latin tableaux which have cubes as their isotopy graphs. 

Earlier sections of the paper are devoted to general properties of isotopy graphs. Theorem \ref{clique_number} states that the clique number of an isotopy graph is either $1,2$ or $4$.  We show that most isotopy graphs are triangle-free, and list the partitions $\lm$ for which the isotopy graph $\mathscr{G}(\lm)$ contains a triangle. Theorem \ref{degree} gives a formula for the degree of vertex in a connected component of an isotopy graph, which depends on both the shape $\lm$ and the filling.

We end this section with a discussion of the history of Latin tableaux, and of our motivation for exploring them. Latin tableaux were introduced by Timothy Chow and Brian Taylor, who hoped to use them as a tool to prove Rota's basis conjecture, stated below \cite{CFGV02, Hua94}.

\begin{conjecture}[Rota's Basis Conjecture]
Let $M$ be a matroid of rank $n$, and suppose $B_1,B_2,\ldots B_n$ are $n$ bases of $M$. Then for each $i$, there is a linear order of $B_i$, say $B_1=\{a_1,a_2,\ldots,a_n\}; B_2=\{b_1,b_2,\ldots,b_n\}; \ldots B_n=\{c_1,c_2,\ldots,c_n\}$; such that
$C_1 = \{a_1, b_1, \ldots ,c_1\}$; $C_2 = \{a_2,b_2,\ldots,c_2\}$; \ldots; $C_n = \{a_n,b_n,\ldots,c_n\}$ are bases of $M$.
\end{conjecture}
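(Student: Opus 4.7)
Rota's basis conjecture is a celebrated open problem dating to the 1980s, so a complete proof cannot be expected from me in a proof sketch. What I can offer is the Latin-tableau programme originally envisioned by Chow and Taylor, which motivates the present paper, together with an indication of where the fundamental difficulty lies.

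The strategy is to reformulate the conjecture as a filling problem. Arrange the $n$ bases as the rows of an $n \times n$ grid, with row $i$ consisting of the elements of $B_i$ in some chosen order. A positive answer to the conjecture is then exactly a choice of reorderings, one per row, such that every column is also a basis of $M$. This is a ``matroid-Latin'' condition: each row is a basis by hypothesis, and one must achieve the column version of the same property. The natural first move is to produce such a filling greedily, building the columns one at a time and using the basis-exchange axiom to insert a suitable element into each slot while preserving earlier choices.

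My attempt would proceed by induction on $n$: contract a carefully chosen element $x$ common to as many of the $B_i$ as possible, apply the inductive hypothesis to the resulting rank-$(n-1)$ matroid, and lift the solution back by placing $x$ in one column and completing the others. When no such uniformly shared element exists, one instead sets up a bipartite matching between cells of the grid and basis elements and attempts to verify Hall's condition on the matroid side; this is essentially the route by which Aharoni--Berger and Geelen--Webb obtain approximate versions of the conjecture with a constant-factor loss.

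The hard part, and the reason the conjecture has resisted decades of effort, is controlling the propagation of local repairs: fixing one column typically destroys another, and no global potential function is known that strictly increases under the natural exchange moves. The isotopy-graph framework developed in the remainder of this paper describes the symmetries of the solution set \emph{after} a valid filling has been produced; my hope would be that regularity and clique-structure results of the kind proved later in the paper eventually feed back into an existence argument, but bridging that gap is precisely the unsolved problem.
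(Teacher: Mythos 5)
You have correctly identified that this statement is not a theorem of the paper but a famous open conjecture: the paper itself states that ``Rota's basis conjecture remains open'' and offers no proof, so there is nothing in the source to compare your argument against. Your proposal, accordingly, is not a proof and does not claim to be one; the genuine gap is the entire existence argument, and you are candid about that. Your summary of the known strategies is essentially accurate and consistent with the paper's own framing: the reformulation as a grid-filling problem is exactly the Chow--Taylor programme that led to Latin tableaux and the Wide Partition Conjecture, and the matching/exchange approaches you mention (Aharoni--Berger, Geelen--Webb) are the standard routes to approximate versions. One refinement worth adding: the strongest known exact results go not through greedy exchange or induction on a shared element, but through the algebraic route the paper cites --- Huang and Rota's equivalence (for even $n$) with the Alon--Tarsi conjecture, which is known for $n = p \pm 1$ with $p$ an odd prime by Drisko and Glynn. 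Your closing hope that the isotopy-graph structure might feed back into an existence proof is speculative and not supported by anything in the paper, which uses isotopy only to study the orbit structure of fillings that already exist; I would not present that as part of a proof strategy. In short: no proof is possible here, none is given in the paper, and your write-up should be read as a literature survey rather than a proof attempt.
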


In the special case where $M$ is a representable matroid, Rota's Basis Conjecture states that given an $n \times n$ array of boxes and $n$ disjoint bases of an $n$-dimensional vector space, we can place a distinct element of the $i^{th}$ basis in each box of the $i^{th}$ row of the array, in such a way that each column of the array contains a basis.  First published in \cite{Hua94}, Rota's basis conjecture remains open.

Chow and Taylor hoped to prove Rota's Basis Conjecture by using induction on the number of boxes of a Young diagram to obtain the statement for squares.  While this strategy was ultimately unsuccessful, it paved the way for Chow, Fan, Goemans, and Vondr\'{a}k to investigate the combinatorics of Latin tableaux in their 2003 paper \cite{CFGV02}.

In \cite{CFGV02}, the authors attempt to prove a version of Rota's conjecture for Young diagrams in the case of \emph{free matroids}--matroids in which all sets are independent \cite{CFGV02}.  Here, Rota's Basis Conjecture reduces to a combinatorial statement about existence of Latin tableaux.  It is easily shown that a Latin tableaux of shape $\lm$ cannot exist unless $\lm$ satisfies a somewhat technical condition called \emph{wideness}.  Based on extensive computational evidence, the authors of \cite{CFGV02} made the following conjecture.

\begin{manualconjecture}{\ref{WPC}}[The Wide Partition Conjecture]
    A partition $\lm$ is wide if and only if there exists a  Latin tableaux of shape $\lm$.
\end{manualconjecture}

The Wide Partition Conjecture (WPC) is still open, although some special cases are proved in \cite{CFGV02}.  The conjecture may be re-stated in a number of contexts, including list colorings and network flows \cite{CFGV02}.  D{\"u}rr and Gu{\'i}{\~n}ez gave a reformulation of the WPC from the perspective of discrete tomography, the study of reconstructing geometric objects from their projections to finite sets of integers \cite{DG13}. More specifically, they translated the Wide Partition Conjecture into a claim about whether certain combinations of integer vectors can be realized as projections of finite subsets of a discrete lattice. 

Huang and Rota proved in \cite{Hua94} that for even $n$, Rota's basis conjecture is equivalent to the Alon-Tarsi conjecture, a purely combinatorial statement about the number of ``even'' versus ``odd'' Latin squares of order $n$ \cite{Hua94, AT92}. The Alon-Tarsi conjecture is known to be true for infinitely many values of $n$. In particular, for $p$ an odd prime, the Alon-Tarsi conjecture was proved for $n = p+1$ by Drisko in the late 1990's, and for $n = p - 1$ by Glynn more than a decade later \cite{Dri97, Gly10}. 

\begin{remark}As an historical note, we mention that Drisko published an incorrect proof of the Alon-Tarsi conjecture for the case $n = 2^r p$, where $p$ is an odd prime and $r$ is a positive integer \cite{Dri98}. Drisko's work relied on a result from the literature, which was later found to be unreliable \cite{Gly10}. While the main theorem of \cite{Dri98} is not proved, other results from the paper remain valid and interesting. For more, see \cite{SW12}.
\end{remark}

A suitable analogue of the Alon-Tarsi conjecture for Latin tableaux would likely imply a tableaux analogue of Rota's basis conjecture, for matroids representable over a finite field \cite{CFGV02}.  Hence understanding the combinatorics of Latin tableaux may yield new insights into long-standing algebraic questions.
\color{black}

\subsection{Main Results}
Our objective in this paper is two-fold.  We seek both to investigate the structural properties of isotopy graphs of Latin tableaux, and to characterize Latin tableaux with isotopy graphs that are isomorphic to cubes. The second objective is achieved in Section \ref{section:cube}, with Theorems \ref{cube_criterion} and \ref{theorem:cube}.

We say a pair of columns in a Latin tableau is \emph{symmetric} if interchanging the two columns has the same effect as interchanging some pair of entries.
\begin{manualtheorem}{\ref{cube_criterion}}
    For $T$ a Latin tableau of shape $\lm$, the graph $\mathscr{G}(T)$ is a cube if and only if both of the following hold.
    \begin{enumerate}
        \item The shape $\lm$ has no more than two rows and no more than two columns of the same length.
        \item The tableau $T$ has no nontrivial symmetries, except those arising from symmetric pairs of columns.
    \end{enumerate}
\end{manualtheorem}

\begin{manualtheorem}{\ref{theorem:cube}}
    For every positive integer $d$, there exists a Latin tableau $T$ such that the isotopy graph of $T$ is isomorphic to a $d$-dimensional cube.
\end{manualtheorem}

Earlier sections of the paper deal with structural properties of general isotopy graphs. In Section \ref{section:clique}, we investigate clique numbers of isotopy graphs, and prove the following theorem.

\begin{manualtheorem}{\ref{clique_number}}
    Let $\lm$ be a wide partition such that $\mathscr{G}(\lm)$ is nonempty.  The clique number of $\mathscr{G}(\lm)$ is either 1, 2 or 4.
\end{manualtheorem}

Moreover, we show that ``most'' isotopy graphs have clique number two.  In particular, most isotopy graphs are triangle-free. Proposition \ref{prop:triangle} gives a characterization of Latin tableaux whose isotopy graphs contain a triangle which, by Theorem \ref{clique_number}, will be always part of a 4-clique.

In Section \ref{section:degree}, we give a formula for the degree of each vertex of an isotopy graph, based on both the shape and filling of a Latin tableau $T$. 

\begin{manualtheorem}{\ref{count_pairs}}
    Let $T$ be a young diagram of shape $\lm$.  Let $a$ be the number of pairs of rows of $\lm$ that have the same length, let $b$ be the number of pairs of columns of $\lm$ that have the same length, and let $p$ be the number of symmetric pairs of columns in $T$, then the degree of a vertex in $\mathscr{G}(T)$ is 
    \begin{equation}\label{formula:degree}
    \text{deg}(\mathscr{G}(T)) = a + 2b - p.
    \end{equation}
\end{manualtheorem}

The formula above will be useful in Section \ref{section:cube}, where we characterize isotopy graphs isomorphic to cubes.

\subsection*{Acknowledgements} We are grateful to the anonymous referee for their helpful suggestions, which markedly improved the exposition in this paper, and for pointing us toward the references \cite{Gly10}, \cite{HKO11}, and \cite{SW12}. We thank Timothy Chow for enlightening conversations about Latin tableaux.
\section{Preliminaries}\label{preliminaries}

\subsection{Background on Latin tableaux}

\begin{definition}
A \emph{partition of $n$} is a tuple of positive integers 
\[\lm = (\lm_1,\ldots,\lm_k)\] where 
\[\lm_1 + \lm_2 + \cdots + \lm_k = n.\]
\end{definition}

\begin{definition}
Let $\lambda=(\lambda_1, \lambda_2, ..., \lambda_n)$ be a partition of $n$, written so that the entries $\lm_i$ are non-increasing.  A \textit{Young diagram} of shape $\lm$ is an array of boxes that are arranged in left-justified rows, where the $i^{th}$ row has $\lm_i$ boxes. We will often use the symbol $\lm$ to denote both the tuple and the diagram with shape $\lm.$

We number the rows of a Young diagram from top to bottom, and the columns from left to right.  We let $(i,j)$ denote the position of the box in row $i$ and column $j$.
\end{definition}

\begin{definition}
The \emph{transpose} of a Young diagram $\lm$, denoted $\lm'$, is the Young diagram obtained by interchanging the rows and columns of $\lm.$ 
Equivalently, $\lm$ has a box $(i,j)$ if and only $\lm'$ has a box $(j,i)$.

The transpose of a partition $\lm$ is the partition that gives the \emph{shape} of the transpose of the Young diagram with shape $\lm.$
\end{definition}

\begin{definition}
A \emph{Young tableau} (or simply \emph{tableau}) is an assignment of a positive integer to each box in a Young diagram.  
A \textit{Latin tableau} is a Young tableau of shape $\lambda=(\lambda_1, \lambda_2, ..., \lambda_n)$
such that
\begin{itemize}
    \item Row $i$ of the tableau contains the numbers in $[\lambda_i]$ in some order.
    \item No number appears more than once in the same column.
\end{itemize}
\end{definition}

\begin{definition}
A \emph{Latin square} is a Latin tableau whose Young diagram has a shape of a square.  A \emph{Latin rectangle} is a Latin tableau whose Young diagram has a shape of a rectangle.
\end{definition}

\begin{definition}
The \textit{content} of a Young tableau $T$, denoted $\text{cont}(T)$, is the sequence $\mu = (\mu_1,\mu_2,\ldots )$, where $\mu_i$ is the number of times the symbol $i$ appears in $T$.
\end{definition}

\begin{remark}\label{Rem:entries}
    For $T$ a Latin tableau of shape $\lm$, the content of $T$ is the partition $\lm'.$  Indeed, the number of entries $i$ in $T$ is precisely the number of rows of $T$ with length greater than or equal to $i$, which is precisely the length of row $i$ of the partition $\lm'.$
\end{remark}

We note that Latin tableaux only exist for some shapes $\lm$. Chow, Fan, Goemans and Vondrak conjectured that the existence of a Latin tableau of shape $\lm$ depends on whether $\lm$ is \emph{wide}, a somewhat technical condition which will be defined below \cite{CFGV02}.

\begin{conjecture} \cite{CFGV02}\label{WPC}
    A Young diagram $\lm$ is wide if and only if there exists a Latin tableau of shape $\lambda$.
\end{conjecture}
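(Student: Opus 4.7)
The plan is to handle the two directions of this biconditional separately. The easy direction---that existence of a Latin tableau of shape $\lambda$ forces $\lambda$ to be wide---should follow from the defect form of Hall's marriage theorem once one unpacks the definition of wideness. Given a Latin tableau $T$ of shape $\lambda$, the column constraint (no symbol repeated in a column) combined with Remark \ref{Rem:entries} (each symbol $i$ appears exactly $\lambda'_i$ times) imposes a natural matching condition on the bipartite graph whose parts are rows and symbols. Reading off the resulting neighborhood inequalities should yield precisely (or easily imply) the combinatorial conditions encoded by wideness. Equivalently, one proves the contrapositive by a pigeonhole argument on a minimal non-wide sub-shape: if the condition defining wideness fails for some sub-partition, then in any attempted filling some column must repeat an entry.

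For the converse---that every wide partition admits a Latin tableau---the natural approach is an inductive construction, filling the tableau one column at a time from left to right (or, dually, one row at a time from top to bottom). At each step one invokes Hall's theorem on the bipartite graph whose parts are the rows that still need an entry in the current column and the symbols still legally available there, with edges recording compatibility (symbol not yet used in the column, row still has an unfilled box there). The hypothesis of wideness of $\lambda$ supplies Hall's condition for the first column, so the base case is clean.

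The main obstacle is that wideness is not obviously preserved by the residual sub-shape after a column is filled: removing one box from each of several columns of $\lambda$, together with removing one symbol from the multiset of allowed entries in each surviving row, can destroy the wideness property even though the unfilled region is still a partition. Thus the naive induction fails to close, and one would need to either strengthen the inductive hypothesis to an invariant genuinely preserved by column-filling, or abandon induction in favor of a global argument. Promising global reformulations include the network-flow and list-coloring translations discussed in \cite{CFGV02}, and the discrete-tomography perspective of D\"urr and Gu\'i\~nez \cite{DG13}, either of which might allow a compactness or LP-duality argument to bypass the fragile inductive step.

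I should note frankly that this is the Wide Partition Conjecture, a well-known open problem; only special cases are established in \cite{CFGV02}. My proposal above is best read as an outline of why the obvious line of attack stalls, and of where a successful proof would need to extract genuinely new leverage from the wideness hypothesis---most plausibly by identifying a stronger, induction-stable strengthening of wideness, or by reducing the problem to a matroid or flow statement for which additional structural theorems are available.
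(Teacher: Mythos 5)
There is no proof to compare against: the statement you were asked to prove is the Wide Partition Conjecture, which the paper explicitly states as an open conjecture (citing \cite{CFGV02}) and never proves --- it only remarks that necessity of wideness ``is easy to check'' while ``sufficiency, however, remains open.'' Your proposal is commendably honest about this, but as a proof it has a genuine and unavoidable gap: the entire converse direction (wide implies existence of a Latin tableau) is not established. Your diagnosis of why the column-by-column induction stalls is accurate --- wideness is defined by a dominance condition $\mu \geq \mu'$ over \emph{all} sub-partitions $\mu \subseteq \lm$, and this family of inequalities is not visibly preserved when one deletes a transversal of boxes and the corresponding symbols from the rows' available lists --- but identifying the obstruction is not the same as overcoming it. No induction-stable strengthening of wideness is exhibited, and the appeal to network-flow, list-coloring, or discrete-tomography reformulations is only a pointer to where such leverage might live, not an argument.

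On the forward (``easy'') direction, your Hall-type sketch is on the right track but still incomplete as written. To recover wideness you must verify the dominance inequality $\mu \geq \mu'$ for \emph{every} sub-partition $\mu$ obtained by deleting rows of $\lm$, not merely a single global matching condition on the full row--symbol bipartite graph. The standard argument restricts a Latin filling of $\lm$ to an arbitrary subset $S$ of rows and counts, for each $j$, how many of the symbols $1,\ldots,j$ can appear in the selected rows given the column-distinctness constraint; one then checks that this count forces the partial sums of $\mu$ to dominate those of $\mu'$. You would need to carry out that counting explicitly rather than asserting that the neighborhood inequalities ``should yield'' wideness. In summary: the forward direction is salvageable with more work, but the statement as a whole cannot currently be proved by you, by the paper, or by anyone --- it is open.
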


 It is easy to check that wideness is necessary; sufficiency, however, remains open. The next few definitions are needed to define a wide partition, which we will do at the end of this section.

\begin{definition}
Let $\lm$ and $\mu$ be partitions of $n$, and write 
\[\lm = (\lm_1, \lm_2, \ldots, \lm_j)\]
\[\mu = (\mu_1, \mu_2, \ldots, \mu_k)\]
where the entries of $\lm$ and $\mu$ are non-increasing.
We say $\lm \leq \mu$ in the \emph{dominance order} on partitions of $n$ if for each $i \in \mathbb{N}$ we have
\[\lm_1 + \lm_2 + \ldots + \lm_i \leq \mu_1 + \mu_2 + \ldots + \mu_i\]
where we take $\lm_{\ell} = 0$ for $\ell > j$, and $\mu_{\ell} = 0$ for $\ell > k$.
\end{definition}

\begin{definition}
Let
\[\lm = (\lm_1, \lm_2, \ldots, \lm_k)\]
be a partition.  We say $\mu \subseteq \lm$ if
$\mu$ is a partition obtained by deleting some of the entries of $\lm$. 
\end{definition}

\begin{definition}
A Young diagram $\lm$ is \emph{wide} if for each $\mu \subseteq \lm$, we have $\mu \geq \mu'$ is dominance order.
\end{definition}

\begin{figure}
    \centering
    \[\yng(4,3,3) \hspace{1in} \yng(7,7,7,5,3,3,2)\]
    \caption{Examples of wide partitions.}
    \label{fig:wide}
    \color{black}
\end{figure}

\begin{remark}
A $m \times n$ rectangle with $m \leq n$ is a wide partition, as is any \emph{staircase shape}--that is, any partition where successive rows decrease in length by one.
\end{remark}

\subsection{Isotopies and isotopy graphs}

\begin{definition}
We say that two Latin tableaux $T_1$ and $T_2$ are \emph{isotopic} if $T_1$ can be obtained from $T_2$ by performing a finite sequence of the following elementary transformations:
\begin{enumerate}
    \item Permuting two rows of $T_2$ which have the same length.
    \item Permuting two columns of $T_2$ which have the same length.
    \item If integer $i$ and $j$ appear the same number of times in the filling of $T_2$, replace each $i$ with a $j$ and vice versa.
\end{enumerate}
We say two elementary transformations are of the same \emph{type} if they both act on rows, both act on columns, or both act on entries.
\end{definition}
\begin{definition}
An \emph{isotopy} of a Latin tableau $T$ is a composition of some sequence of the above three transformations. The \emph{isotopy class} of $T$ is the set of all Latin tableaux which are isotopic to $T$.
\end{definition}

\begin{definition}
Let $T$ be a Latin tableau with shape $\lambda$.  We define the \emph{isotopy graph} $\mathscr{G}(T)$ of $T$ as follows.
\begin{enumerate}
    \item The vertices of $\mathscr{G}(T)$ are Latin tableaux of shape $\lm$ that are isotopic to $T$.
    \item Two vertices in $\mathscr{G}(T)$ are adjacent if one can be obtained from the other by a single elementary transformation.
\end{enumerate}
We label the edge of $\mathscr{G}(T)$ which connects vertices $u$ and $v$ by the elementary transformation which relates $u$ and $v$. 
\end{definition}

\begin{definition}
The \emph{isotopy graph} $\mathscr{G}(\lm)$ of a Young diagram $\lambda$ is the disjoint union of the isotopy graphs $\mathscr{G}(T)$ as $T$ ranges through a set of representatives for the isotopy classes of Latin tableaux with shape $\lm$. 
\end{definition}

\begin{figure}[ht]
    \centering
    \includegraphics[trim=3in 5in 3in 1.75in, clip] {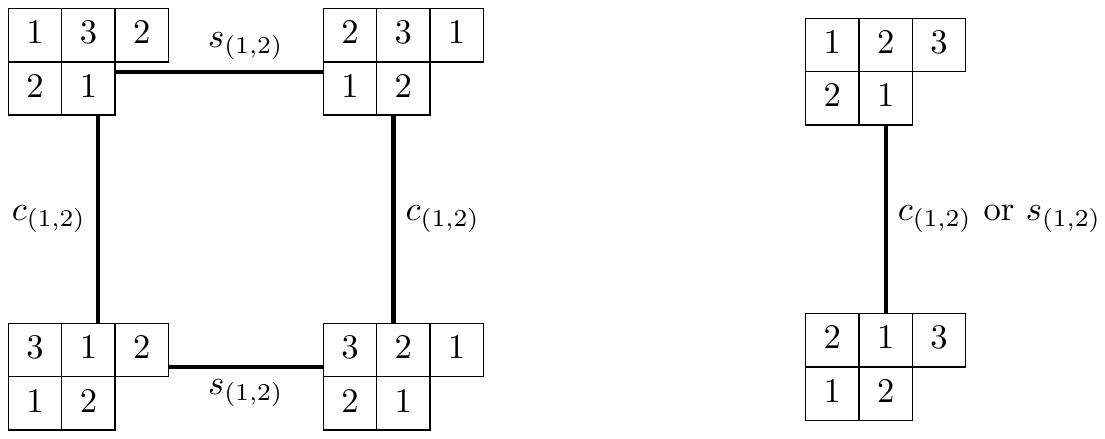}
    \caption{
    Isotopy graph for $\lm = (3,2)$. The transformation $c_{(1,2)}$ permutes the first two columns, while $s_{(1,2)}$ permutes entries $1$ and $2$. These two components of the isotopy graph are cubes of dimension 1 and 2.}
   \label{fig:small_graph}
\end{figure}

A major goal of this project is to characterize the graphs $\mathscr{G}(T)$ for various Latin tableaux $T$, and construct tableaux $T$ for which $\mathscr{G}(T)$ has an especially nice structure. We note that the shape $\lm$ of the tableaux $T$ plays a key role in determining the structure of $\mathscr{G}(T).$

\begin{definition}
    For a Latin tableau $T$, let $r_{(i,j)}$ denote the elementary transformation that permutes the rows $i$ and $j$.  Similarly, let $c_{(i,j)}$ denote the elementary transformation that permutes columns $i$ and $j$, and let $s_{(i,j)}$ denote the elementary transformation that permutes the entries $i$ and $j$ wherever they appear in the tableau. 
\end{definition}

\begin{remark}
    Note that the transformation $r_{(i,j)}$ is defined for a tableau $T$ if and only if rows $i$ and $j$ have the same length.  Similarly $c_{(i,j)}$ is defined if and only if columns $i$ and $j$ have the same length, and $s_{(i,j)}$ is defined if and only if entries $i$ and $j$ appear an equal number of times in the tableau.
\end{remark}

\begin{definition}\label{def:group}
    Let $T$ be a Latin tableau of shape $\lm$, and let $a_1 > a_2 > \cdots > a_b$ be the \emph{distinct} integers which appear as parts of the partition $\lm$.  For $1 \leq i \leq b$, let $k_i$ be the number of times $a_i$ appears in $\lm$.  Then the \emph{row group} of $T$ is the direct product 
    $S_{\text{row}}(T) = S_{k_1} \times S_{k_2} \times \cdots S_{k_b}$
    where $S_{k_i}$ is the symmetric group on $[k_i]$.We can define the \emph{column group}, denoted by  $S_{\text{col}}(T)$, analogously, with $\lm'$ taking the place of $\lm$.  Finally, by Remark \ref{Rem:entries}, we know that $\text{cont}(T) = \lm'$.  Hence we define the \emph{entry group} $S_{\text{ent}}(T)$ in the same way, again letting $\lm'$ take the place of $\lm$.
\end{definition}

We note that $S_{\text{row}}(T)$ is the group of isotopies of $T$ which are obtained by permuting rows of the same length; for each $i$, the factor $S_{k_i}$ acts by permuting the $k_i$ rows of length $a_i$.  The group $S_{\text{row}}(T)$ is generated by the set of elementary transformations $r_{(i,j)}$ defined above. The same kind of remark applies to $S_{\text{col}}(T)$ and $S_{\text{ent}}(T)$.

\begin{definition}
    The \emph{isotopy group} of a Latin tableau $T$ is the direct product 
    \[\mathscr{S}(T) = S_{\text{row}}(T) \times S_{\text{col}}(T) \times S_{\text{ent}}(T).\]
\end{definition}

It is clear that transformations which are not of the same type commute, 
so we have a well-defined, transitive action of $\mathscr{S}(T)$ on the isotopy class of $T$ by isotopies of the Latin tableaux.  We adopt the convention that elements of $\mathscr{S}(T)$ act on the left, so for $u, v \in \mathscr{S}(T)$, $vuT$ is the tableau obtained from $T$ by applying first the isotopy $u$, then the isotopy $v$.

\begin{remark}\label{coset}
Let $T$ be a Latin tableaux, and let $H$ be the stabilizer of $T$ in the group $\mathscr{S}(T)$. The isotopy graph $\mathscr{S}(T)$ is closely related to the \emph{Schreier coset graph} of $\mathscr{S}(T)/H$, whose vertices are left cosets of $H$, and whose edges are of the form $uH \text{---} vuH$ where $u \in \mathscr{S}(T)$ and $v$ is an elementary transformation. However, in the case where two elementary transformations $v$ and $v'$ act identically on the tableau $uT$, the isotopy graph has a single edge from $uT$ to $vuT$, which is labeled with both $v$ and $v'$.  Hence the isotopy graph $\mathscr{G}(T)$ is obtained from the Schreier coset graph of $\mathscr{S}(T)/H$ by replacing each set of multiple edges with a single edge.   We feel this convention is more natural for our purposes, as we are more interested in the combinatorics of the tableaux than in the structure of the group $\mathscr{S}(\lm).$

Every Schreier coset graph is \emph{vertex transitive}, meaning that for two vertices $x,y$ of the graph, there is an automorphism of the graph that carries $x$ to $y$.  Hence $\mathscr{S}(T)$ is vertex-transitive for every Latin tableaux $T$.
\end{remark}

It follows from Definition \ref{def:group} that each factor in the product $\mathscr{S}(T)$ depends only on the shape $\lm$ of $T$.  Hence we may speak of the row group of the partition $\lm$, $S_{\text{row}}(\lm)$, and similarly for the column and entry groups. When there is no ambiguity about which Latin tableaux we are referring to, we sometimes use $S_{\text{row}}$ instead of $S_{\text{row}} (T)$, and similarly for the column and entry groups.

\begin{definition}
    The \emph{isotopy group} of a partition $\lm$ is the direct product 
    \[\mathscr{S}(\lm) = S_{\text{row}}(\lm) \times S_{\text{col}}(\lm) \times S_{\text{ent}}(\lm).\]
\end{definition}

\begin{lemma}
Given a Young diagram $\lm$ which admits a Latin filling, the group $\mathscr{S}(\lm)$ acts on $\mathscr{G}(\lm)$. The orbits of this action are the graphs $\mathscr{G}(T)$, as $T$ ranges over a collection of representatives for the isotopy classes of Latin tableaux of shape $\lm$.
\end{lemma}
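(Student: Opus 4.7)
The plan is to verify the two assertions of the lemma separately: first, that $\mathscr{S}(\lm)$ acts on $\mathscr{G}(\lm)$ by graph automorphisms, and second, that the orbits of this action, equipped with their induced subgraph structure, are precisely the graphs $\mathscr{G}(T)$ for $T$ a system of isotopy class representatives.

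To establish the action on the vertex set, I would note that $\mathscr{S}(\lm)$ is generated by the elementary transformations $r_{(i,j)}, c_{(i,j)}, s_{(i,j)}$, so it suffices to check that each generator sends a Latin tableau of shape $\lm$ to another Latin tableau of shape $\lm$. Each $r_{(i,j)}$ only swaps rows of equal length (so the diagram shape is preserved), and since the row-$i$ and row-$j$ entries just exchange positions, both Latin conditions persist; the arguments for $c_{(i,j)}$ and $s_{(i,j)}$ are analogous (for $s_{(i,j)}$, the content partition is preserved because $i$ and $j$ appear the same number of times). Extending multiplicatively gives a well-defined action on vertices.

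For the action on edges, I would show that if $T\text{---}T'$ is an edge of $\mathscr{G}(\lm)$, with $T' = vT$ for an elementary transformation $v$, then for any $\sigma \in \mathscr{S}(\lm)$ the vertices $\sigma T$ and $\sigma T'$ are also adjacent. Writing $\sigma T' = (\sigma v \sigma^{-1})(\sigma T)$, the task reduces to showing that $\sigma v \sigma^{-1}$ is again an elementary transformation of the same type as $v$. Since $\mathscr{S}(\lm) = S_{\text{row}}(\lm) \times S_{\text{col}}(\lm) \times S_{\text{ent}}(\lm)$, the factors other than the one containing $v$ commute with $v$; within the relevant factor, conjugating a transposition of rows of equal length (resp.\ columns, entries) by a permutation that preserves the block structure yields another such transposition, namely $r_{(\sigma(i),\sigma(j))}$ (resp.\ $c$ or $s$). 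This guarantees adjacency is preserved, so $\mathscr{S}(\lm)$ acts by graph automorphisms.

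For the orbit description, the orbit of $T$ under $\mathscr{S}(\lm)$ consists of all tableaux $\sigma T$ with $\sigma \in \mathscr{S}(\lm)$; because $\mathscr{S}(\lm)$ is generated by the elementary transformations, this is exactly the isotopy class of $T$, i.e.\ the vertex set of $\mathscr{G}(T)$. Any edge of $\mathscr{G}(\lm)$ between two such vertices, being labeled by an elementary transformation, lies in $\mathscr{G}(T)$ by definition, and conversely every edge of $\mathscr{G}(T)$ is an edge of $\mathscr{G}(\lm)$; thus the induced subgraph on the orbit is $\mathscr{G}(T)$. The main obstacle is purely bookkeeping: confirming that conjugation within $\mathscr{S}(\lm)$ carries elementary transformations to elementary transformations, which is straightforward once one unpacks the block structure of the row, column, and entry groups.
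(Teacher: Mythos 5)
Your proof is correct. The paper states this lemma without any proof, treating it as immediate from the definitions; your verification --- that each elementary transformation preserves the shape and both Latin conditions (so $\mathscr{S}(\lm)$ acts on the vertex set), that conjugation carries an elementary transformation to another of the same type via $\sigma T' = (\sigma v \sigma^{-1})(\sigma T)$ (so adjacency is preserved), and that orbits coincide with isotopy classes because $\mathscr{S}(\lm)$ is generated by the elementary transformations --- is exactly the routine check the authors omit, and it contains no gaps.
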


\begin{remark}
    If a Young diagram $\lm$ does not admit a Latin filling, the isotopy graph of $\lm$ is the empty graph.  Hence by Conjecture \ref{WPC}, it is believed that the isotopy graph of $\lm$ is nonempty if and only if $\lm$ is wide.
\end{remark}

\section{Clique numbers of isotopy graphs}
\label{section:clique}

\begin{definition}
A clique in a graph is a set of vertices such that any two of them are connected by an edge. The clique number of a graph is the size of a largest clique.
\end{definition}

\begin{figure}
    \centering
    \includegraphics[trim=3in 3.75in 3in 1.75in, clip]{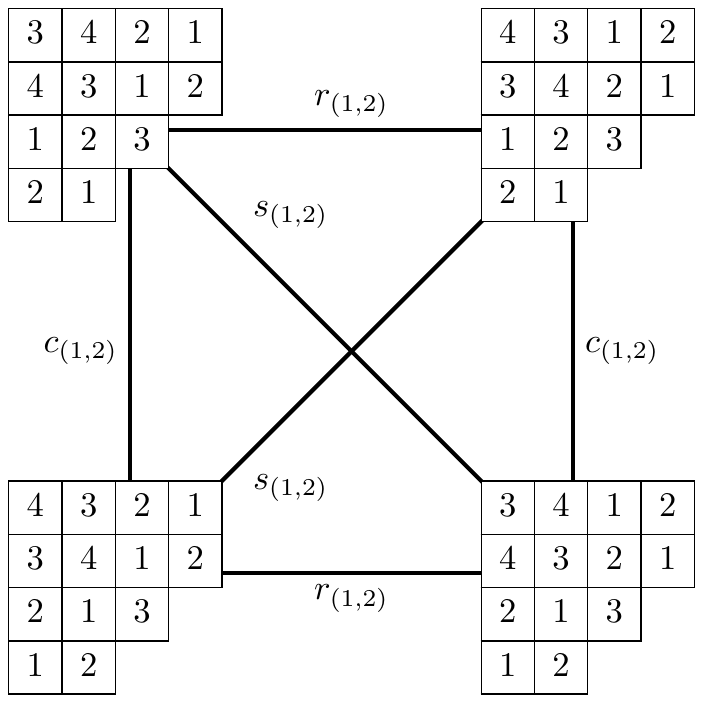}
    \caption{An isotopy graph which is a complete graph on four vertices.}
    \label{fig:clique}
\end{figure}
In this section, we study clique numbers of isotopy graphs.  We show that isotopy graphs of Young diagrams have clique number $1,2$ or $4$, and classify the Young diagrams whose isotopy graphs have clique number $4$. In particular, we show in Proposition \ref{prop:triangle} that the isotopy graphs of most Young diagrams have clique number $2$.  
\begin{lemma}
\label{different_sides}
Let $\lm$ be a wide partition other than $(4,4)$.  Then $\mathscr{G}(\lm)$ has clique number at most 4. In a 3-cycle of $\mathscr{G}(\lm)$, no two edges are labeled with elementary transformations of the same type.  
\end{lemma}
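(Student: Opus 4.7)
The plan is to establish the structural claim about 3-cycle edge labels first, and then derive the clique number bound by a short pigeonhole argument.

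Suppose a triangle $T_1 T_2 T_3$ has edges labeled by elementary transformations $g_1, g_2, g_3$ with at least two of the same type. Since elements in different factors of $\mathscr{S}(T_1) = S_{\text{row}}(T_1) \times S_{\text{col}}(T_1) \times S_{\text{ent}}(T_1)$ commute, the condition $g_3 g_2 g_1 T_1 = T_1$ rearranges into $(g_2 g_1) T_1 = g_3 T_1$, where $g_2 g_1$ is a nontrivial product of two transpositions in a single factor. Two facts drive the case analysis. First, the stabilizer of $T_1$ in each single factor is trivial: two identical rows or columns of $T_1$ would create a repeated column or row entry, violating the Latin conditions, and a nontrivial entry permutation cannot fix $T_1$ because distinct entries occupy disjoint positions. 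Second, a product of three transpositions in one symmetric group is odd and hence nontrivial. Together these facts immediately rule out the three "all edges same type" cases.

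For the six "two of same type plus one different" combinations, the plan is to compare $(g_2 g_1) T_1$ with $g_3 T_1$ entry by entry and invoke the Latin conditions. Four of these combinations collapse quickly: for example, in the (row, row, column) case, the column-distinctness condition forces every row in the support of $g_2 g_1$ to have length $2$; since only two length-$2$ Latin rows exist, no nontrivial product of two distinct row transpositions can be supported there, a contradiction. The (row, row, entry), (column, column, entry), and (entry, entry, column) cases yield analogous contradictions via appropriate Latin constraints on rows or columns.

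The hard part, and the source of the $(4,4)$ exception, is the pair of cases (column, column, row) and (entry, entry, row). In the (column, column, row) case, an entry-by-entry comparison forces every column of $\lm$ to lie in the support of the column permutation $g_2 g_1$, since any column fixed by $g_2 g_1$ would also have to be fixed by the row swap, making two rows of $T_1$ agree in that column. A three-cycle support quickly produces two identical columns, violating the Latin condition, so $g_2 g_1$ must be a double transposition, and hence $\lm$ has exactly $4$ columns. A parallel argument rules out any row of $\lm$ beyond the two swapped by $g_3$, so $\lm = (4,4)$. The (entry, entry, row) case runs in parallel and again forces $\lm = (4,4)$. For $\lm \neq (4,4)$ both cases are thus impossible, completing the proof of the structural claim. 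The clique bound then follows by pigeonhole: in any clique on five or more vertices, every vertex has at least four incident edges using only three possible type labels, so two share a type, and those two edges together with the edge between their other endpoints form a triangle with two same-type edges, contradicting the structural claim.
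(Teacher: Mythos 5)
Your proposal is correct in outline and rests on the same core mechanism as the paper's proof: if two edges of a triangle carry labels of the same type, then the composite $g_2g_1$ is a product of two transpositions in a single factor of the isotopy group, and one compares how many rows, columns, or entries it must disturb against what a single transposition of another type can disturb. The difference is in the bookkeeping. The paper argues directly that $rr'$ (resp.\ $cc'$, $ss'$) affects at least three rows, columns, and entries, reduces the problematic subcases to the $2\times 3$ and $2\times 4$ rectangles, settles $2\times 3$ by an explicit check of its isotopy graph, and excludes $2\times 4$ from the lemma entirely (deferring it to Lemma \ref{two_by_four}). You instead enumerate all type patterns, dispatch the all-same-type ones by parity together with triviality of the stabilizer in each single factor, and derive structurally that the two surviving patterns force $\lm=(4,4)$; this makes the exceptional shape fall out of the analysis rather than being checked or excluded by hand, which is a genuine gain in self-containedness. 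Two steps need tightening when you write this up. First, in the (column, column, row) case you assert that \emph{every} column of $\lm$ lies in the support of $g_2g_1$; the entrywise comparison only forces this for columns meeting the two swapped rows, while columns beyond their common length must instead be \emph{fixed} by $g_2g_1$ (otherwise two columns of $T_1$ coincide and a row repeats an entry). Combined with your parallel argument that no third row can survive, the conclusion $\lm=(4,4)$ still follows, but the deductions should be ordered that way. Second, ``a three-cycle support quickly produces two identical columns'' covers the two-row subcase only after iterating the relation ``column $\sigma^{-1}(k)$ equals column $k$ with its two entries swapped'' twice to get $C_{\sigma(k)}=C_k$; when a third row exists the contradiction is instead a repeated entry within that fixed row. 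With those details filled in, the argument is complete, and your concluding pigeonhole for the clique bound is the same as the paper's.
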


\begin{proof}
    Let $u$, $v$ and $w$ be distinct vertices of $\mathscr{G}(\lm)$.  
    Suppose $u = r(v)$, $w = r'(v)$ for distinct elementary transformations $r, r' \in S_{\text{row}}$.  We claim that $u$ and $w$ are not adjacent vertices of $\mathscr{G}(\lm)$.  
    
    So see this, note that $r'$ is an involution, so we have $v = r'(w)$ and $u = rr'(w)$.  Since $rr'$ is a product of two transpositions, $rr'$ either cyclically permutes three rows of $w$, or transposes two distinct pairs of rows.  In either case, $rr'$ acts on at least three rows, so $u$ and $w$ cannot be related by a single row transposition.
    
    In a wide partition, there cannot be more than two rows of length two.  Hence $rr'$ acts on at least two rows of length three.  In a Latin tableau, acting by a row permutation means replacing one row with another that differs from the first in every box.  Hence boxes in at least three columns are  affected; as are at least three distinct entries.  It follows that $u$ and $w$ are not related by a single transposition of columns or entries.
    
    We have shown that no triangle in $\mathscr{G}(\lm)$ has two sides both labeled by row transpositions. Next, let $u$, $v$ and $w$ be distinct vertices of $\mathcal{G}(\lm)$, and suppose $u = c(v)$, $w = c'(w)$ for $c, c'$ elementary transformations in $S_{\text{col}}$.  As above, we have $u = cc'(w)$. As above, the transformation $cc'$ changes at least three distinct columns and three distinct entries, so $u$ and $w$ cannot be related by transposing a single pair of columns or entries.  
    Moreover, $cc'$ changes entries in at least three rows, unless $cc'$ acts on three of four columns of length two.  Suppose this is the case.  We must check that $cc'$ does not act on $w$ by switching the two rows which intersect these columns.  This is clear unless $\lm$ is  either a $2 \times 3$ or $2 \times 4$ rectangle, since otherwise some boxes of the affected rows will be fixed by $cc'$, while others are moved.  A check of $\mathscr{\lm}$ for the $2 \times 3$ rectangle reveals that no triangle with two sides labeled by column transformations exists in this case.
    The case of a $2 \times 4$ rectangle is not addressed by this lemma, and will be handled separately.
    
    Finally, suppose $u$, $v$ and $w$ are distinct vertices of $\mathscr{G}(\lm)$, and suppose $u = s(v)$, $w = s'(w)$ for $s, s'$ elementary transformations in $S_{\text{ent}}$.  As above, we have $u = ss'(w)$.  Again $ss'$ must permute at least three distinct entries, and so cannot act by a transposition of two entries.  Moreover, in a Latin tableaux, all entries must appear in the first (longest) row.  Hence $ss'$ permutes at least three boxes in the same row, and cannot act by a single transposition of columns.  Finally, we must check that $ss'$ cannot act by switching two rows.  Again, this is manifestly impossible unless $\lm$ is a $2 \times 3$ or $2 \times 4$ rectangle.  A check reveals that no such triangle exists in $\mathscr{G}(\lm)$ when $\lm$ is a $2 \times 3$ rectangle.
    
    We have shown that for $\lm \neq (4,4)$, each triangle in $\mathscr{G}(\lm)$ must have sides labeled with elementary transformations of distinct type.  Now, suppose $\lm \neq (4,4)$ is a wide partition such that $\mathscr{G}(\lm)$ has clique number greater than two. Let $u$ be any vertex of $\mathscr{G}(\lm)$ which is contained in a clique of size greater than two, and let $v$ and $w$ be additional vertices of this clique. Then the edges $uv$ and $uw$ form two sides of a triangle in the clique, and must therefore be labeled by elementary transformations of distinct type.  Since there are only three types of elementary transformations, it follows that any clique in $\mathscr{G}(\lm)$ must have at most four vertices, so the clique number of $\mathscr{G}(\lm)$ is at most four.
\end{proof}

For $\lm = (4,4)$, the statement that no triangle in the isotopy graph has two sides labeled by transformations of the same type no longer holds, as shown in Figure \ref{fig:four_clique}.  However, by modifying our arguments slightly, we can show that $\mathscr{G}(\lm)$ has clique number four in this case as well.

\begin{figure}
    \centering
    \includegraphics[trim=3in 2.75in 3in 1.5in, clip]{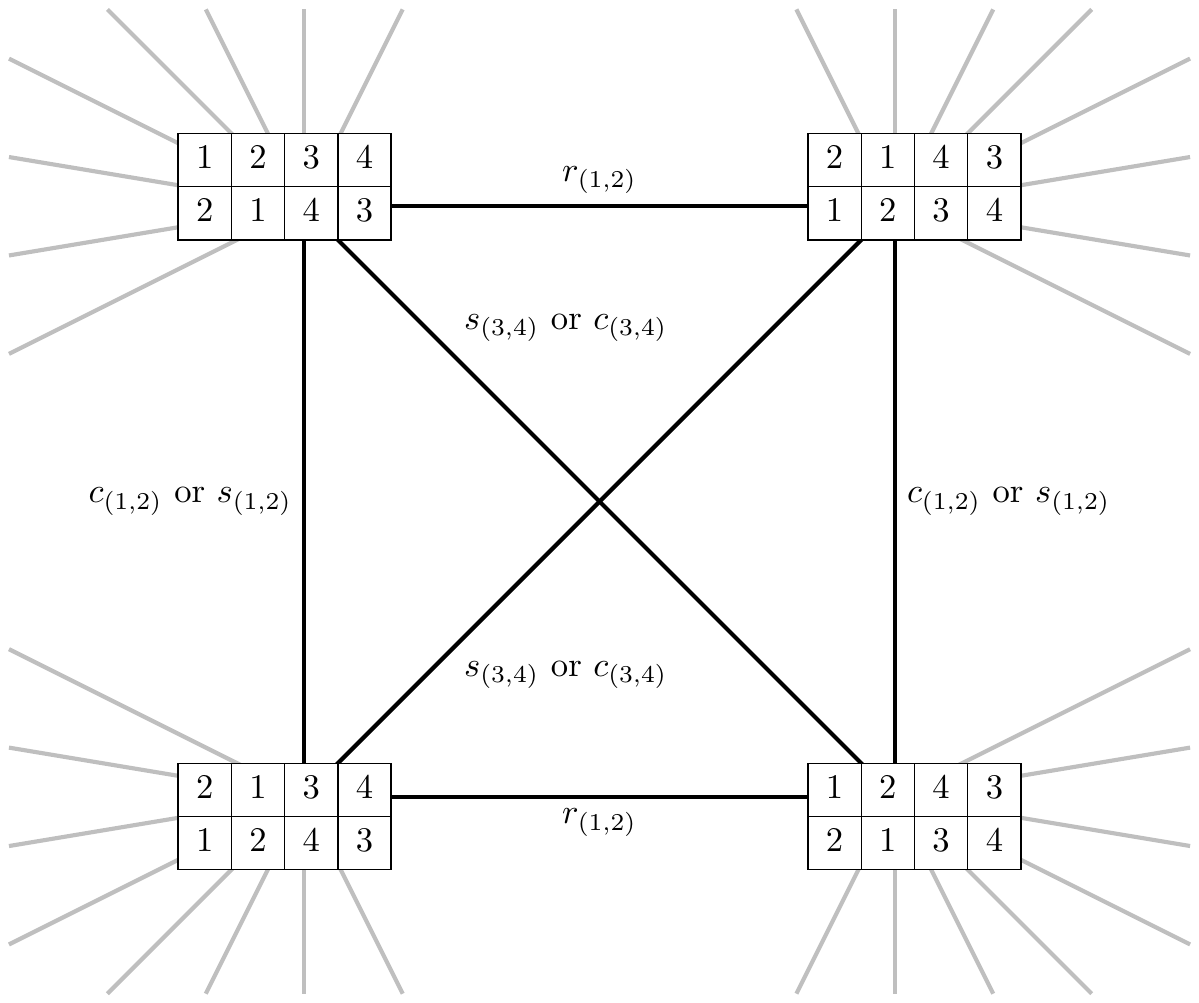}
    \caption{Four fillings of $\lm=(2,2)$, which form a maximal four-clique in the isotopy graph.}
    \label{fig:four_clique}
\end{figure}

\begin{lemma}\label{two_by_four}
Let $\lm = (4,4)$. The clique number of $\mathscr{G}(\lm)$ is $4$.
\end{lemma}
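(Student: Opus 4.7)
The proof splits into a lower bound (exhibiting a $4$-clique) and an upper bound (no $5$-clique exists). For the lower bound, take $T_0$ to be the tableau with rows $(1,2,3,4)$ and $(2,1,4,3)$, as in Figure \ref{fig:four_clique}. Columns $1,2$ contain all occurrences of entries $1,2$, so on $T_0$ the transformation $c_{(1,2)}$ has the same effect as $s_{(1,2)}$; call the resulting tableau $T_1$. Likewise $c_{(3,4)}$ and $s_{(3,4)}$ coincide on $T_0$, producing a tableau $T_2$, while $r_{(1,2)} T_0 =: T_3$ also equals $c_{(1,2)} c_{(3,4)} T_0$. Verifying that each pair of $\{T_0, T_1, T_2, T_3\}$ is related by a single elementary transformation yields the desired $K_4$ subgraph.

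For the upper bound, the key observation is that $\mathscr{G}(\lm)$ has two connected components, distinguished by the cycle type of the derangement matching row $2$ to row $1$ (after normalizing the first row to $(1,2,3,4)$). One component consists of tableaux $T$ with $p = 2$ symmetric column pairs (derangement of cycle type $(a\,b)(c\,d)$), and the other of those with $p = 0$ (derangement a $4$-cycle). I handle each case separately.

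For the $p = 0$ component, I adapt the argument of Lemma \ref{different_sides}. The only cases where that argument fails for $\lm = (4,4)$ are when the composition $cc'$ of two disjoint column transpositions (or $ss'$ of two disjoint entry transpositions) acts on a tableau $T$ as a single elementary transformation. A direct check shows that $cc' T = r_{(1,2)} T$ forces the four columns of $T$ to split into two symmetric pairs, and similarly $cc' T = s_{(x,y)} T$ forces symmetric column pairs containing the entries $\{x,y\}$; either condition implies $p = 2$. Hence when $p = 0$ no triangle has two edges of the same type, and since only three types are available (row, column, entry), at each vertex the edges of a clique have pairwise distinct types. The clique number in this component is therefore at most $4$.

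For the $p = 2$ component, I use vertex-transitivity to reduce to analyzing cliques through $T_0$. Its $11$ neighbors are $T_1, T_2, T_3$ together with the $8$ tableaux $c_{(i,j)} T_0$ and $s_{(i,j)} T_0$ for $(i,j) \in \{(1,3),(1,4),(2,3),(2,4)\}$. To rule out a $5$-clique through $T_0$, it suffices to show that the only $3$-clique among these $11$ neighbors is $\{T_1, T_2, T_3\}$. This reduces to checking that none of the eight ``asymmetric'' neighbors is adjacent to any of $T_1, T_2, T_3$, which can be verified by writing out the corresponding neighbor lists of $T_1, T_2, T_3$ and observing that, outside the $4$-clique itself, the asymmetric neighbor list of $T_0$ is disjoint from those of $T_1, T_2, T_3$. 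The main obstacle is this disjointness check, which requires tracking the effects of thirteen elementary transformations on four tableaux. A more conceptual route would use the order-$16$ stabilizer of $T_0$ in $\mathscr{S}(\lm)$ to describe the $8$ asymmetric neighbors as a single orbit and argue disjointness at the orbit level; but the direct verification, while tedious, is straightforward and is what I would use to ensure correctness.
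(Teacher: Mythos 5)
Your lower bound is correct and matches the paper's Figure \ref{fig:four_clique}, and your treatment of the $p=0$ component (adapting Lemma \ref{different_sides} and checking that $cc'T = r_{(1,2)}T$ or $cc'T = s_{(x,y)}T$ forces two symmetric column pairs) is sound. The gap is in the $p=2$ component, which is exactly the case that matters, since that is where the $4$-clique lives and where the Lemma \ref{different_sides} argument breaks down. Two problems: first, the decisive verification is announced but never performed, so the upper bound in this component rests on an unexecuted computation. Second, the stated reduction is logically insufficient even if carried out: showing that none of the eight ``asymmetric'' neighbors of $T_0$ is adjacent to $T_1$, $T_2$, or $T_3$ rules out a $4$-clique in $N(T_0)$ that meets $\{T_1,T_2,T_3\}$, but it does not rule out a $3$-clique (and hence potentially a $4$-clique) lying entirely among the eight asymmetric neighbors. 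You would need to check those $\binom{8}{2}$ adjacencies as well, or argue separately that no such triangle exists.

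The paper avoids all of this with a uniform argument that needs no component split and no brute force, and you already have its ingredients. From the observation that a composition of two column (resp.\ entry) transpositions moves at least three columns and three entries, every triangle in $\mathscr{G}((4,4))$ must have edge labels of type $\{r,c,s\}$, $\{r,c,c\}$, or $\{r,s,s\}$; in particular every triangle contains exactly one edge labeled by a row transformation. Since $r_{(1,2)}$ is the \emph{only} row transformation for $\lm=(4,4)$, each vertex is incident to exactly one $r$-labeled edge. In a $4$-clique $\{u,v,w,x\}$ with $v=r(u)$, the triangle $\{u,w,x\}$ forces $w=r(x)$, so the four vertices pair up under $r$; a fifth common neighbor $y$ would then produce the triangle $\{x,y,u\}$ with no $r$-labeled edge, a contradiction. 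I would recommend replacing your $p=2$ analysis with this argument, or at minimum completing and correcting the neighborhood check.
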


\begin{proof}

Let $T$ be a filling of $\lm$, and consider a triangle $C$ in $\mathscr{G}(T)$.  Note that a composition of two column operations must impact at least three different columns, and at least three different entries.  Hence there cannot be a triangle in $\mathscr{G}(T)$ which has three sides labeled by column transformations; or two sides labeled by column operations and one side labeled by a permutation of entries.  Similarly, no triangle has all three sides labeled by transformations on entries, or two labeled by entry transformations and one labeled by a column transformation.
Hence, the possible labels for the sides of the triangle are 
$\{r,c,s\}$, $\{r,c,c\}$ and $\{r,s,s\}$.
We claim this implies that $\mathscr{G}(\lm)$ has a clique number of at most $4$.

To see this, note that there is only one row transformation $r = r_{(1,2)}$ in the isotopy group of $\lm$.  Hence each vertex of $\mathscr{G}(\lm)$ is incident at exactly one edge labeled $r$.  Let $u,v,w,x$ be vertices of a four-clique in $\mathscr{G}(\lm)$. Any three of these four vertices form a triangle, which has one side labeled $r$.  

Suppose $v = r(u).$  Then since $\{u,w,x\}$ form the vertices of a triangle, and $u$ is incident at exactly one edge labeled $r$, we must have that $w = r(x)$.  Hence in the subgraph induced by $\{u,v,w,x\}$, each vertex is incident at an edge labeled $r$.  Let $y$ be another vertex of $T$.  Then no edge from $y$ to a vertex in $\{u,v,w,x\}$ can be labeled with $r$. If $y$ were adjacent to all the vertices $\{u,v,w,x\}$, then we would have a triangle with vertices $\{x,y,u\}$.  Since the edge from $x$ to $u$ must have a label other than $r$, no edge in this triangle can be labeled $r$, a contradiction. So the clique number of $\mathscr{G}(\lm)$ is at most four.

\end{proof}

\begin{theorem}
    \label{clique_number}
    Let $\lm$ be a wide partition such that $\mathscr{G}(\lm)$ is nonempty.  The clique number of $\mathscr{G}(\lm)$ is either 1, 2 or 4.
\end{theorem}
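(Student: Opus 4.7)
The plan is to combine the two preceding lemmas with one extra observation: every triangle in an isotopy graph extends to a $4$-clique, so the clique number never takes the value $3$. For $\lm=(4,4)$, Lemma \ref{two_by_four} already gives clique number $4$. For $\lm\neq(4,4)$, Lemma \ref{different_sides} supplies the upper bound of $4$, so it suffices to rule out clique number exactly $3$, i.e., to show that every triangle lies inside a $4$-clique.

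Fix a triangle $u,v,w$ in $\mathscr{G}(\lm)$ with $\lm\neq(4,4)$. By Lemma \ref{different_sides}, its three edges are labeled by elementary transformations of three distinct types, so after relabeling I may assume $v=ru$, $w=su$, and $w=cv$ for some row transformation $r$, entry transformation $s$, and column transformation $c$. Equating the two expressions for $w$ yields $cru=su$; since transformations of different types commute in $\mathscr{S}(\lm)$ and each of $r,c,s$ is an involution, this is equivalent to the relation $crs\in\stab(u)$.

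My candidate fourth vertex is $x:=cu$. Using the pairwise commutativity of $r,c,s$ together with $crs\in\stab(u)$, one checks directly that $u$ and $x$ are related by $c$, that $rx=cru=su=w$, and that $sx=csu=cw=ccv=v$. Thus, provided $x\notin\{u,v,w\}$, the set $\{u,v,w,x\}$ is a $4$-clique whose six edges form three ``opposite pairs'' respectively labeled $r$, $c$, and $s$.

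The main obstacle, and the only nontrivial step, is verifying that $x$ is genuinely distinct from $u,v,w$. A collision $x=u$ would put $c$ into $\stab(u)$, a collision $x=v$ would put $rc$ into $\stab(u)$, and a collision $x=w$ would put $cs$ into $\stab(u)$. In each case, combining with $crs\in\stab(u)$ inside the abelian subgroup $\langle r,c,s\rangle$ and cancelling involutions forces one of $r$, $s$, $c$ individually to lie in $\stab(u)$; but that collapses one of the edges of the original triangle and contradicts the distinctness of $u,v,w$. This excludes clique number $3$, and together with Lemmas \ref{different_sides} and \ref{two_by_four} establishes that the clique number of $\mathscr{G}(\lm)$ belongs to $\{1,2,4\}$.
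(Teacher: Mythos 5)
Your proposal is correct and follows essentially the same route as the paper: dispose of $\lm=(4,4)$ via Lemma \ref{two_by_four}, invoke Lemma \ref{different_sides} for the upper bound and the distinct-type labeling, and complete every triangle to a $4$-clique with the fourth vertex $x=cu$ using commutativity of $r,c,s$. Your extra verification that $x\notin\{u,v,w\}$ is a welcome refinement the paper omits; just note that in the collision $x=u$ one obtains $rs\in\stab(u)$ (hence $v=w$) rather than a single generator in $\stab(u)$, which still yields the required contradiction.
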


\begin{proof}
    We have seen that the clique number of $\mathscr{G}(\lm)$ is $4$ when $\lm = (4,4)$.  Suppose now that $\lm$ is not equal to $(4,4)$.
    
    By Lemma \ref{different_sides}, it suffices show that if $\mathscr{G}(\lm)$ has a 3-cycle, that 3-cycle must be contained in a clique of size 4. Suppose $\mathscr{G}(\lm)$ has a 3-cycle $C$. By Lemma \ref{different_sides}, the edges of $C$ must be labeled by distinct elementary transformations $r,c$ and $s$ where $r \in S_{\text{row}}$, $c \in S_{\text{col}}$ and $s \in S_{\text{ent}}.$  
    
    Let $u, v$ and $w$ be the vertices of $C$, and suppose without loss of generality that we have $v = r(u)$, $w = s(u)$, and $w = c(v)$.  Let $x = c(u)$.  Since $r$ and $c$ pairwise commute, we have 
    \[w = cr(u) = rc(u) = r(x).\]
    So $x$ is adjacent to $w$.  
    
    Since $c(v) = w = s(u)$, and $c$ is an involution, we have
    \[v = c(w) = cs(u)\]
    and since $c$ and $s$ commute we have
    \[v = sc(u) = s(x).\]
    So $x$ is adjacent to $v$, and $x$ is adjacent to all three vertices of $C$.  Hence $\{u,v,w,x\}$ are the vertices of a clique of size 4 in $\mathscr{G}(\lm)$. This completes the proof.
\end{proof}

\begin{proposition}{\label{prop:triangle}}
Let $T$ be a Latin tableaux of shape $\lm$.  Then $\mathscr{G}(T)$ has a triangle if and only if there exist elementary transformations $r= r_{(i_1,i_2)} \in S_{\text{row}}$, $c=c_{(j_1,j_2)} \in S_{\text{col}}$ and $s=s_{(a_1,a_2)} \in S_{\text{ent}}$ such that $a_1,a_2 \in [4]$, $rc(T) = s(T)$ and all of the three following properties hold:
\begin{enumerate}
    \item \label{outside_square} All boxes which are in rows $i_1$ or $i_2$ but \emph{not} in columns $j_1$ or $j_2$ have entries in $\{a_1,a_2\}$.  All boxes which are in columns $j_1$ or $j_2$ but \emph{not} in rows $i_1$ or $i_2$ also have entries in $\{a_1,a_2\}$.
    \item \label{fixed_by_s}No entry in $\{a_1,a_2\}$ appears in any other row or column of $T$.
    \item \label{small_square} Suppose rows $i_1$ and $i_2$ intersect columns $j_1$ and $j_2$, so that $T$ has boxes in positions $(i_1,j_1)$, $(i_1,j_2)$, $(i_2,j_1)$ and $(i_2,j_2)$, which mark the corners of a square within $T$.  Then the entries in these four boxes form one of the patterns shown below, where $i,j, k \in \{1,2\}$ and \[\{b_1,b_2\} = [4] \backslash \{a_1,a_2\}.\]
    
    \[\young(\bi\bj,\bj\bi) \qquad \young(\ai\bk,\bk\aj) \qquad \young(\bk\ai,\aj\bk)\]
    
\end{enumerate}
\end{proposition}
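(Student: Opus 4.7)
The plan is to reduce the existence of a triangle in $\mathscr{G}(T)$ to the single algebraic identity $rc(T)=s(T)$ for some elementary transformations $r\in S_{\text{row}}$, $c\in S_{\text{col}}$, $s\in S_{\text{ent}}$, and then translate that identity into the three combinatorial conditions.

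For the forward direction, suppose $\mathscr{G}(T)$ contains a 3-cycle. By Theorem \ref{clique_number} this cycle sits inside a clique of size four, and I would first arrange a 3-cycle through $T$ whose edges are labeled by transformations of all three types: for $\lm\ne(4,4)$ this is Lemma \ref{different_sides}, and for $\lm=(4,4)$ I would use that in the 4-clique described in the proof of Lemma \ref{two_by_four}, every non-row edge admits both a column label and an entry label, so any triangle there can be relabeled to use each of $r,c,s$ exactly once. Vertex transitivity (Remark \ref{coset}) lets me arrange that $T$ is the vertex where the $r$ and $s$ edges meet, with $c(T)$ the fourth vertex of the clique; chasing the clique edges then yields $rc(T)=s(T)$. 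I would next derive conditions (1)--(3) by analyzing this identity cell-by-cell. Outside the ``plus-shaped'' region formed by rows $i_1,i_2$ together with columns $j_1,j_2$, the composition $rc$ fixes every cell, so $s$ must too, forcing entries there to avoid $\{a_1,a_2\}$ and giving (2). Along the arms of the plus the identity reads $T(i_2,j)=s(T(i_1,j))$ for $j\notin\{j_1,j_2\}$ (and symmetrically for the vertical arms); any entry fixed by $s$ would force a repeated entry in column $j$, contradicting the Latin condition, so both entries must lie in $\{a_1,a_2\}$, giving (1). At the four corners, the identity prescribes $T(i_2,j_2)=s(T(i_1,j_1))$ and $T(i_2,j_1)=s(T(i_1,j_2))$; a short case analysis on how many of the four corner entries belong to $\{a_1,a_2\}$, together with the Latin row/column constraints, leaves only the three patterns of (3).

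For the reverse direction, I would verify $rc(T)=s(T)$ directly from (1)--(3): outside the plus, (2) and the triviality of $rc$ there give agreement; along the arms, (1) forces each affected vertical (resp.\ horizontal) pair of entries to be exactly $\{a_1,a_2\}$, so the row (resp.\ column) swap performed by $rc$ matches the entry swap $s$; and at the four corners each of the three listed patterns is directly checked to satisfy $rc=s$. Once the identity holds, the commutativity of distinct-type transformations gives $c(r(T))=rc(T)=s(T)$, so $T$, $r(T)$, and $s(T)$ are pairwise adjacent in $\mathscr{G}(T)$ via $r$, $s$, and $c$ respectively, producing the desired 3-cycle.

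The main obstacle is the corner case analysis yielding (3), together with deducing the global constraint $a_1,a_2\in[4]$. There are sixteen a priori possibilities for which of the four corner entries lie in $\{a_1,a_2\}$, and the Latin row/column conditions together with the corner identities must be used to eliminate all but the three listed patterns (and to match the roles of the $a_i$ and $b_k$). The bound $a_1,a_2\in[4]$ is not visible from the corner identity in isolation; it follows by observing that rows $i_1$ and $i_2$ must contain the set $\{1,\dots,\lm_{i_1}\}$, while (1) and (3) confine the entries of those rows to $\{a_1,a_2,b_1,b_2\}$, forcing $\lm_{i_1}\le 4$ and hence $\{a_1,a_2\}\subseteq[4]$ with $\{b_1,b_2\}=[4]\setminus\{a_1,a_2\}$.
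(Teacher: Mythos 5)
Your proposal is correct and follows essentially the same route as the paper's proof: reduce the existence of a triangle through $T$ (via vertex transitivity and Lemma \ref{different_sides}, with $(4,4)$ deferred to Lemma \ref{two_by_four}) to the identity $rc(T)=s(T)$, and then translate that identity cell-by-cell into conditions (1)--(3), with the bound $a_1,a_2\in[4]$ coming from the fact that rows $i_1,i_2$ can contain at most four distinct entries. The only cosmetic difference is that you make the $(4,4)$ relabeling and the sixteen-case corner analysis slightly more explicit than the paper does.
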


Note that from the conditions stated in the proposition, it follows that rows $i_1$ and $i_2$ both have length at most $4$, and the same holds for columns  $j_1$ and $j_2.$--see Appendix \ref{Appendix}.

\begin{proof}
    The fact that this holds for $\lm = (4,4)$ follows from the proof of Lemma \ref{two_by_four}.  So assume $\lm \neq (4,4)$. 
    
    First suppose that $T$ satisfies the above criteria.  Let $r = r_{(i_1,i_2)}$ and $c = c_{(j_1,j_2)}$.  If rows $i_1$ and $i_2$ intersect columns $j_1$ and $j_2$, then $rc$ interchanges boxes $(i_1,j_1)$ and $(i_2,j_2)$; and also interchanges boxes $(i_1,j_2)$ and $(j_2,i_1)$. By Criterion \ref{small_square}, this means that $rc$ either interchanges pairs of boxes that contain the same entry; or pairs where the two boxes contain $a_i$ and $a_j$ respectively with $i,j \in \{1,2\}$.   Hence $rc$ acts on these four boxes by applying $s_{(a_1,a_2)}$. 
    
    Next, note that $rc$ swaps any remaining boxes in row $i_1$ with the box in the same column of row $i_2$.  Since the boxes in question must contain entries $a_i,a_j$ for $i,j \in \{1,2\}$ by criterion \ref{outside_square}, and no two boxes in the same column may have the same entries, it follows that $rc$ acts on these rows by applying $s_{(a_1,a_2)}$. Similarly, $rc$ acts by $s_{(a_1,a_2)}$ on the boxes that are in columns $j_1,j_2$ but not rows $i_1,i_2$.  Hence $rc$ acts on the entire tableau by $s_{(a_1,a_2)}$ and the isotopy graph contains a triangle.
    
    Next, suppose the isotopy graph of $T$ has a triangle.  Note that the isotopy group of $\lm$ acts transitively on vertices of $\mathscr{G}(T)$.  Hence $\mathscr{G}(T)$ has a triangle if and only if there is a triangle which includes the vertex indexed by $T$ itself.  
    
    By Lemma \ref{different_sides}, $T$ is a vertex of a triangle if and only if there are elementary transformations $r \in S_{\text{row}},$ $c \in S_{\text{col}}$, $s \in S_{\text{ent}}$ such that $rc(T) = s(T)$.  Suppose $r = r_{(i_1,i_2)}$, $c = c_{(j_1,j_2)}$, and $s = s_{(a_1,a_2)}$.
    
    Then in particular rows $i_1$ and $i_2$ are of the same length, and so are columns $j_1$ and $j_2$. All entries which are in rows $i_1$ or $i_2$, but not in columns $j_1$ or $j_2$, are changed by the action of $rc$.  Hence those boxes must contain entries in $\{a_1,a_2\}$. Note that this implies rows $i_1,i_2$ each have length at most 4, since each contains at most two distinct entries in boxes outside of columns $j_1,j_2$.  So $a_1,a_2 \in [4]$ as desired.  Similarly, all entries in columns $j_1$ or $j_2$ which are not contained in $i_1$ or $i_2$ must have entries in $\{a_1,a_2\}$ and Criterion \ref{outside_square} holds.  Moreover, $T$ cannot have entries from $\{a_1,a_2\}$ in boxes which are not moved by $rc$, so Criterion \ref{fixed_by_s} holds. 
    
    Finally, if rows $i_1$ and $i_2$ intersect columns $j_1$ and $j_2$, then again $rc$ acts by interchanging boxes $(i_1,j_1)$ and $(i_2,j_2)$; and interchanging boxes $(i_1,j_2)$ and $(j_2,i_1)$.  This action is only equivalent to applying $s_{(a_1,a_2)}$ if the entries in these four boxes fall into one of the three cases in Criterion \ref{small_square}.  This completes the proof.
\end{proof}

By Proposition \ref{prop:triangle}, we can classify the Young diagrams $\lm$ such that $\mathscr{G}(\lm)$ has a triangle. In any such $\lm$ there must be a pair of rows $i_1,i_2$ of the same length, which is at most $4$; and a pair of columns $j_1,j_2$ of the same length, which is at most $4$, as in the statement of Proposition \ref{prop:triangle}.  If rows $i_1,i_2$ do not intersect columns $j_1,j_2$, then Condition 1 from Proposition \ref{prop:triangle} implies that both rows $i_1,i_2$ and columns $j_1,j_2$ are of length $2$, and both contain only entries in $\{1,2\}$.  Hence $\lm$ is a wide partition with four rows, such that rows $3$ and $4$ are of length $2$; and rows $1$ and $2$ are of length at least $4$.

If rows $i_1,i_2$ and columns $j_1,j_2$ intersect, this again imposes a substantial restriction on the shape $\lm$. We note, however, that the existence of a pair of columns and a pair of rows that intersect in the necessary way does not guarantee that $\mathscr{G}(\lm)$ has a triangle.  The exceptions are wide partitions with 4 or 5 rows, whose lower 4 rows form a copy of the diagram
\[\lm = \yng(4,4,3,1).\]
To see why, let $T$ be a Latin filling of $\lm$. Note that if $\mathscr{G}(T)$ were to have a triangle, it would in particular have a triangle containing the vertex corresponding to $T$ itself.  This triangle would necessarily have two sides labeled $r_{(1,2)}$ and $c_{(2,3)}.$  The third side would be labeled by $s_{(a_1,a_2)}$, where $a_1$ and $a_2$ are the entries in the second and third boxes in row 3.  However, the last row of $\lm$ contains only a single box, which must be filled with a $1$.  Hence either $a_1$ or $a_2$ must equal $1$, which violates Condition 2 of Proposition \ref{prop:triangle}. Thus $\mathscr{G}(\lm)$ is triangle-free.

We include as an appendix a complete list of Young diagrams $\lm$ such that $\mathscr{G}(\lm)$ has a triangle.

\section{Vertex Degrees of Isotopy Graphs}
\label{section:degree}

By Remark \ref{coset}, each connected component of an isotopy graph is $k$-regular for some $k$, meaning all vertices have the same degree.  The lemma below gives a bound on the maximum degree of a vertex in an isotopy graph of a Latin tableau, which depends on both the shape of the tableau and its entries. 

Before stating the lemma, we give an example.  Figure \ref{fig:four_clique} shows four Latin tableaux of shape 
$\lm = (4,4)$, which form a maximal clique in the isotopy graph of the tableau
\[T = \young(1234,2143).\]
There are 13 elementary transformations that act on Latin tableaux of shape $\lm = (4,4)$: one transposition that acts on rows, 6 that act on columns, and 6 that act on entries. So the degree of any vertex of $\mathscr{S}(T)$ is at most 13. However, the transformation $c_{(1,2)}$, which switches the first two columns of $T$, acts identically to $s_{(1,2)}$, which switches entries $1$ and $2$. Similarly $c_{(3,4)}$, the transformation which switches the columns $3$ and $4$, acts identically to $s_{(3,4)},$ which switches entries $3$ and $4$.  Hence there are only $11$ elementary transformations which act distinctly on $T$, and the degree of each vertex of $\mathscr{G}(T)$ is $11$.

\color{black}

\begin{lemma}\label{degree}
    Let $T$ be a Young diagram of shape $\lm$.  Let $a$ be the number of \emph{pairs} of rows of $\lm$ which have the same length, and let $b$ be the number of \emph{pairs} of columns of $\lm$ which have the same length.  Then the degree $d_T$ of each vertex of $\mathscr{G}(\lm)$ is bounded above by $a+2b$.  Moreover the degree of each vertex in a connected component $\mathscr{G}(T)$ is equal to $a + 2b$ unless one of the following holds:
    \begin{enumerate}
        \item The first row of $\lm$ has length $n$, the second row has length $m < n$, and for some $i,j > m$, the boxes at positions $(1,i)$
 and $(1,j)$ contain entries $x,y \in \{m+1,\ldots,n\}$.
        \item The second row of $\lm$ has length $n$, the third row has length $m < n$, and there are columns $m < i,j \leq n$ whose boxes give a tableau of the form \[\young(xy,yx)\] for $\{x,y\} \subseteq \{m+1,\ldots,n\}$.
    \end{enumerate}
\end{lemma}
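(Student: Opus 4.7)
The plan is to first establish the upper bound $a + 2b$ by counting the elementary transformations that act on $T$, and then to pinpoint exactly when two distinct elementary transformations give the same neighbor of $T$ in $\mathscr{G}(T)$, matching the collapses to conditions (1) and (2). Row swaps contribute $a$ and column swaps contribute $b$; by Remark~\ref{Rem:entries}, entry $i$ appears exactly $\lm'_i$ times in $T$, so $s_{(i,j)}$ is defined iff $\lm'_i = \lm'_j$, contributing another $b$. This gives $d_T \leq a + 2b$, and since by the vertex-transitivity noted in Remark~\ref{coset} all vertices in a connected component have the same degree, it suffices to analyze the count at $T$ itself.

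For the equality claim, I need to show that if neither (1) nor (2) holds, then any two distinct elementary transformations $\sigma \neq \tau$ satisfy $\sigma(T) \neq \tau(T)$. Within-type coincidences are ruled out uniformly: the composition of two distinct row swaps acts non-trivially on three or four rows, so fixing $T$ would force some pair of rows to agree in every shared column, violating the Latin column condition. The parallel arguments --- using row distinctness for column swaps, and the fact that a non-trivial permutation of entries can fix $T$ only if it fixes every entry appearing in $T$ --- exclude within-type collapses for columns and for entries.

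The heart of the argument, and the main obstacle, is the cross-type case, especially the column-entry coincidence. Suppose $c_{(k,l)}(T) = s_{(x,y)}(T)$. Outside columns $k, l$ the column swap acts trivially, so $s_{(x,y)}$ must too, forcing $x, y$ to appear only within columns $k$ and $l$. In any row $r$ meeting both columns, comparing the two actions yields $\{T_{r,k}, T_{r,l}\} = \{x, y\}$. If columns $k, l$ had length at least $3$, the first two rows would be forced into the alternating pattern $\young(xy,yx)$ by column distinctness, leaving no legal entry in $\{x, y\}$ for row $3$ of column $k$. Hence columns $k, l$ have length $1$ or $2$, yielding precisely conditions (1) and (2) (with $x, y \in \{\lm_2+1, \ldots, \lm_1\}$ in the length-one case and $x, y \in \{\lm_3+1, \ldots, \lm_2\}$ together with a $\young(xy,yx)$ block in the length-two case). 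Finally, for the remaining cross-type coincidences $r_{(i,j)}(T) = c_{(k,l)}(T)$ and $r_{(i,j)}(T) = s_{(x,y)}(T)$, a parallel box-by-box analysis forces rows $i, j$ to have length exactly $2$ with no other row of length $\geq 2$, so $\lm = (2, 2)$; both Latin fillings of $(2,2)$ already satisfy condition (2), so these exceptional cases are subsumed.
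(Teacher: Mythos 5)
Your proof is correct and follows essentially the same route as the paper's: count the elementary transformations to get the bound $a+2b$ (using that entry-pairs correspond to column-pairs), then classify all coincidences $\sigma(T)=\tau(T)$ by type, ruling out same-type collapses via the Latin conditions, reducing row--column and row--entry collapses to the $2\times 2$ square (absorbed into condition (2)), and showing column--entry collapses force the two columns to have length $1$ or $2$, giving conditions (1) and (2) respectively. Your treatment is, if anything, slightly more explicit than the paper's about the same-type cases, and you correctly state the entry range as $\{m+1,\ldots,n\}$ where the paper's proof has a typo.
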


\begin{proof}
    The degree of a vertex in $\mathscr{G}(T)$ is the number of elementary transformations whose actions on $T$ are distinct. We have one elementary transformation for every pair of rows of the same length; one for every pair of columns which have the same length; and one for every pair of entries that appear the same number of times.  Since the number of pairs of entries that appear the same number of times is exactly the number of pairs of columns of the same length, the total number of elementary transformations in the isotopy group of $\mathscr{G}(\lm)$ is therefore $a + 2b$.  Hence $d_T = a + 2b$, as long as no two elementary transformations act identically on $T$.
    
    Suppose there exist two elementary transformations $u$ and $v$ which act identically on $T$.  We note that switching two rows of length 3 or more will change boxes in more than two distinct columns, and boxes with more than two distinct entries.  Hence no other elementary transformation can have the same effect as switching a pair of rows of length 3 or more.  Similarly, no other elementary transformation can have the same effect as switching two columns of length 3 or more, or two entries which each appear three or more times.  
    
    Suppose either $u$ or $v$ is a row transformation; without loss of generality $u$ acts by switching $2$ rows, which are of necessity both of length $2$ (a wide partition cannot have $2$ rows of length $1$). The transformation $v$ must act on precisely the same collection of boxes, by switching either $2$ columns or $2$ entries.  In the former case, $v$ can only move boxes which are also affected by $u$.  Hence $T$ has a pair of rows of length 2, which are wholly contained in a pair of columns of length 2, and $T$ is a $2 \times 2$ square, so condition (2) holds.  In the latter, we must have $v = s_{(1,2)}$.  But this means there are no entries $1$ or $2$ outside the two rows effected by $v$.  Hence those rows are the whole tableau, so $T$ is again a $2 \times 2$ square.
    
    Hence if $\lm$ is not a $2 \times 2$ square, it follows that neither $u$ nor $v$ acts on rows.  Suppose without loss of generality that $u$ acts by switching columns $i$ and $j$, and $v$ acts on entries.  Then the columns acted on by $u$ contain only entries which are acted on by $v$, and hence both have length either $1$ or $2$.  Suppose both have length $1$.  Then if the first row of $T$ has length $n$, the second has length $m < n$.  The entries acted on by $v$ must be contained only in the columns which are affected by $u$.  Since a row of length $k$ in a Latin tableau contains entries $\{1,2,\ldots,k\}$, it follows that the boxes in positions $(1,i)$ and $(1,j)$  which are affected by $u$ must contain entries in $\{x,y\} \subseteq \{n+1,\ldots,m\}$.  So Condition (1) holds.

    Similarly, suppose columns $i$ and $j$ both have length $2$.  Then there must be entries $\{x,y\}$ which appear in the boxes $(1,i), (1,j),(2,i),(2,j)$ and nowhere else in $T$. Since a row of length $k$ in a Latin tableau contains entries $\{1,2,\ldots,k\}$, this is impossible unless condition (2) holds.
\end{proof}

\begin{definition}
    If a Latin tableau $T$ satisfies either Condition (1) or Condition (2) from \ref{degree}, we call the two columns $i,j$ in question a \emph{symmetric} pair of columns.
\end{definition}

\begin{theorem}\label{count_pairs}
    Let $T$ be a young diagram of shape $\lm$.  Let $a$ be the number of pairs of rows of $\lm$ that have the same length, let $b$ be the number of pairs of columns of $\lm$ that have the same length, and let $p$ be the number of symmetric pairs of columns in $T$. Then the degree of a vertex in $\mathscr{G}(T)$ is $a + 2b - p$.
\end{theorem}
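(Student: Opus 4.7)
The plan is to refine Lemma \ref{degree} into an exact formula by enumerating coincidences among elementary transformations. The degree of any vertex $T$ in $\mathscr{G}(T)$ equals the number of distinct images of $T$ under elementary transformations of $\mathscr{S}(T)$, since no elementary transformation fixes $T$: the Latin property forces two distinct rows of equal length to differ in at least one column (and similarly for columns and entries). By Definition \ref{def:group} together with Remark \ref{Rem:entries}, which identifies the content of $T$ with $\lm'$, the total number of elementary transformations decomposes as $a$ row transformations, $b$ column transformations, and $b$ entry transformations (one per pair of columns of equal length), yielding $a + 2b$ in total. Hence the degree equals $a + 2b$ minus the number of transformations collapsed by the equivalence relation ``acts identically on $T$.''

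Next, I would establish that two distinct elementary transformations of the same type can never act identically on $T$. For column transformations: if $c_{(i,j)}(T) = c_{(i',j')}(T)$ with $\{i,j\} \neq \{i',j'\}$, then their composition $c_{(i,j)} c_{(i',j')}$ fixes $T$ while acting nontrivially on at least three distinct columns; by considering a row meeting the ``asymmetric'' columns (those moved by one transformation but fixed by the other), one forces two distinct columns to carry identical entries in some row, contradicting the Latin property. An analogous argument handles entry transformations (distinct entries occupy disjoint sets of boxes, so any nontrivial product of entry transpositions relocates some entry), and likewise for row transformations.

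The central input is then Lemma \ref{degree} and its proof, which together show that the only possible coincidences among elementary transformations occur between a column transformation $c_{(i,j)}$ and an entry transformation $s_{(x,y)}$, and that each such coincidence corresponds precisely to a symmetric pair of columns as described in Conditions (1)--(2). Combined with the distinctness step above, this shows the coincidence relation is a matching: each equivalence class under ``acts identically on $T$'' has size $1$ or $2$, and size-$2$ classes biject with symmetric pairs, yielding exactly $p$ matched pairs. Counting: there are $a + 2b - 2p$ singleton classes and $p$ doubleton classes, so the degree equals $(a + 2b - 2p) + p = a + 2b - p$.

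The main obstacle will be the bijection between size-$2$ equivalence classes and symmetric pairs. Distinct symmetric pairs immediately yield distinct column transformations, but I must also verify that a given column transformation $c_{(i,j)}$ participates in at most one coincidence; this follows from the distinctness argument for entry transformations, since $c_{(i,j)}(T) = s_{(x,y)}(T) = s_{(x',y')}(T)$ would force $\{x,y\} = \{x',y'\}$. Once this matching structure is established, the formula $\deg(\mathscr{G}(T)) = a + 2b - p$ is obtained by a routine count.
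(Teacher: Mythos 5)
Your proof follows essentially the same route as the paper's: both count the $a+2b$ elementary transformations, partition them into classes according to whether they act identically on $T$, invoke Lemma~\ref{degree} to identify the size-two classes with symmetric pairs of columns, and conclude that the degree is $(a+2b-2p)+p = a+2b-p$; your version merely makes explicit the same-type distinctness and the matching structure that the paper's proof leaves implicit. (One caveat shared with the paper: for $\lm=(2,2)$ the transformations $r_{(1,2)}$, $c_{(1,2)}$, $s_{(1,2)}$ all coincide on $\young(12,21)$, giving a single class of size three, so the ``size $1$ or $2$'' claim and the formula itself require that case to be excluded.)
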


\begin{proof}
    We may partition the elementary transformations acting on $\mathscr{G}(T)$ into equivalence classes, by declaring two transformations equivalent if they act identically on $T$.  The degree of a vertex of $\mathscr{G}(T)$ is then the number of equivalence classes.  It follows from the proof of the previous lemma that two transpositions are equivalent if and only if they form a pair $u = c_{(i,j)}$, $v = s_{(x,y)},$ where columns $i$ and $j$ of $T$ form a symmetric pair of columns containing entries from $\{x,y\}$.  Hence we have $p$ equivalence classes containing two elementary transformations. The remaining $a + b - 2p$ transformations are equivalent only to themselves, giving a total of $a + 2b - p$ equivalence classes.
\end{proof}

In the case where $\lm$ is a square, the formula in Theorem \ref{count_pairs} has an especially nice form.

\begin{corollary}
    Let $T$ be a Latin square of order $n$, where $n \geq 3$. Then $\mathscr{G}(T)$ is regular of degree $3\binom{n}{2}.$
\end{corollary}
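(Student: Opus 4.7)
The plan is to apply Theorem \ref{count_pairs} and observe that for a Latin square of order $n \geq 3$ the correction term $p$ vanishes. Regularity itself follows from Remark \ref{coset}, which establishes that every isotopy graph is vertex-transitive; each connected component is thus regular, so it suffices to compute the common degree.

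First I would compute $a$ and $b$. The shape $\lm$ of $T$ is an $n \times n$ square: it has $n$ rows all of length $n$, giving $a = \binom{n}{2}$ pairs of rows of equal length. Its transpose $\lm'$ likewise has $n$ rows of length $n$, so $b = \binom{n}{2}$ by the same count.

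The main step is to show $p = 0$. By the definition following Lemma \ref{degree}, a symmetric pair of columns in $T$ arises only through Condition (1) or Condition (2) of that lemma. Condition (1) requires the second row of $\lm$ to be strictly shorter than the first, but in a square the two rows have the same length. Condition (2) requires the third row to be strictly shorter than the second; since $n \geq 3$ the third row of $\lm$ exists and also has length $n$, so Condition (2) fails as well. Therefore $T$ has no symmetric pair of columns and $p = 0$.

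Putting the pieces together, Theorem \ref{count_pairs} gives the degree of every vertex of $\mathscr{G}(T)$ as $a + 2b - p = \binom{n}{2} + 2\binom{n}{2} - 0 = 3\binom{n}{2}$, as desired. The only part with any subtlety is the vanishing of $p$, and this is precisely where the hypothesis $n \geq 3$ enters: the $2 \times 2$ square is already flagged as an exception in the proof of Lemma \ref{degree}, and for that shape the formula genuinely collapses.
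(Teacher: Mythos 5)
Your proposal is correct and follows essentially the same route as the paper: apply Theorem \ref{count_pairs} with $a = b = \binom{n}{2}$ and argue $p = 0$. The only cosmetic difference is that the paper rules out symmetric pairs of columns by noting all columns have length at least $3$, whereas you rule them out by checking that the row-length inequalities in Conditions (1) and (2) of Lemma \ref{degree} fail for a square; these are two readings of the same conditions and both are valid.
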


\begin{proof}
    We note that $T$ is a Latin tableau which has $n$ rows, all of the same length, and $n$ columns, also all of the same length. Hence in the statement of Theorem \ref{count_pairs}, we have $a = b = \binom{n}{2}$. Since all columns have length $\geq 3,$ the tableau $T$ cannot have a symmetric pair of columns, so $p = 0$ and the proof is complete.
\end{proof}

We remind the reader that for a Young diagram $\lambda$, we denote with $\lambda_1$ the length of the first row (which is also the biggest row). In what follows, we denote by $\Lambda_n$ the set of all Young diagrams such that $\lambda_1 = n$. Observe that in particular the Young diagram $\lambda$ corresponding to a Latin square of size $n \geq 3$ is in $\Lambda_n$ and each vertex of $\mathscr{G}(\lm) $ has degree $3\binom{n}{2}$. For any other Young diagram  $\lambda \in \Lambda_n$, the maximum vertex degree of $\mathscr{G}(\lambda)$ is less than or equal to $3\binom{n}{2}$. We also notice that the staircase shape $\lambda = (n, n-1, ..., 1)$ has an isotopy graph consisting of a single vertex, with degree zero. It is an open problem to find out for which numbers $0 \leq \ell \leq 3\binom{n}{2}$ does there exist a wide partition $\lambda$ in $\Lambda_n$ such that the maximum vertex degree of $\mathscr{G}(\lambda)$ is $\ell$. For all $n$, the partition consisting of a single row of length $n$ has an isotopy graph which is regular of degree $\binom{n}{2}$. When $n$ is even and $n \geq 4$, the tableau $T_n$ constructed in the next section provides an example of a tableau in $\Lambda_n$ whose isotopy graph has maximum degree $n$.

\section{Cubes in isotopy graphs}
\label{section:cube}

\begin{definition} We say a graph $G$ is \emph{isomorphic to a d-cube} (or simply that $G$ is a $d$-cube) if there is a bijection between vertices of $G$ and $d$-tuples with entries in $\{0,1\}$ such that two vertices are adjacent if and only if the corresponding $d$-tuples differ in a single coordinate. 
\end{definition}

\begin{figure}[ht]
    \centering
    \includegraphics[trim=3in 2.75in 3in 1.75in, clip]{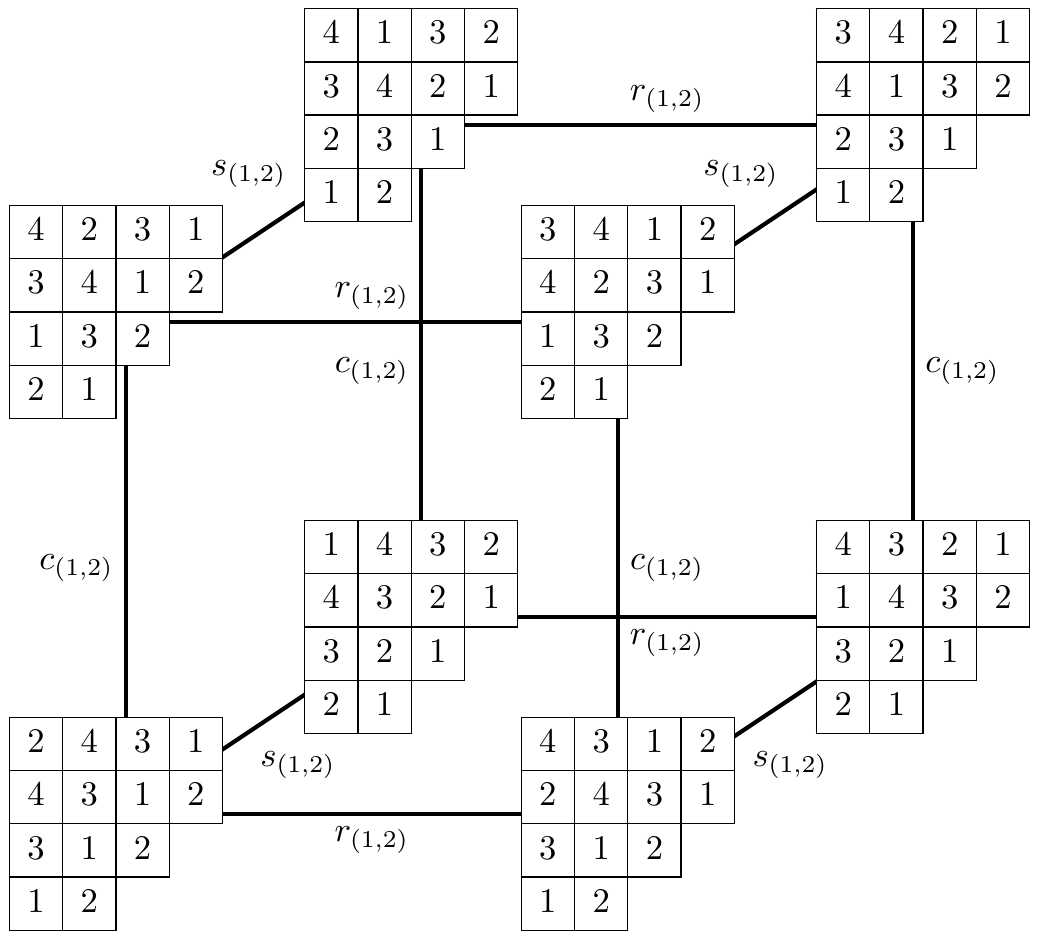}
    \caption{A graph isomorphic to a $3$-cube.}
    \label{fig:cube}
\end{figure}

The goal of this section is to characterize tableaux whose isotopy graphs are isomorphic to a cube, and construct isotopy graphs isomorphic to cubes of all dimensions. We will see that if a tableau $T$ of shape $\lm$ has an isotopy graph which is a cube, then $\lm$ must satisfy the following condition.

\begin{definition}
    If a shape $\lm$ has no three rows and no three columns of the same length, we call $\lm$ a \textbf{squareable} Young diagram. A Latin tableaux is squareable if its shape is a squareable Young diagram.
\end{definition}

The next lemma will be used in the proof of Lemma \ref{lem:squareable} below, which states that a tableau whose isotopy graph is a cube must be squareable.

\begin{lemma} \label{commuting}
    All elementary transformations acting on a Latin tableau $T$ of shape $\lm$  pairwise commute if and only if $\lm$ is squareable.
\end{lemma}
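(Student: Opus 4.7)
The plan is to treat the two directions separately and handle three types of elementary transformations (row, column, entry) one at a time, leveraging the fact (noted earlier in the paper, in the paragraph after Definition \ref{def:group} on the commutativity of different types) that transformations of different types always commute. Thus the only question is commutativity within a single type, which reduces to commutativity of transpositions in a symmetric group acting on the rows, columns, or entries of $T$.

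For the direction ``squareable implies pairwise commuting,'' I would argue: since $\lm$ has at most two rows of any given length, for any two row transpositions $r_{(i,j)}$ and $r_{(k,\ell)}$ the index pairs $\{i,j\}$ and $\{k,\ell\}$ either coincide or are disjoint, so the underlying transpositions commute in $S_{\text{row}}(\lm)$, and hence their actions on $T$ commute. The identical argument handles column transpositions. For entry transpositions, I would invoke Remark \ref{Rem:entries}: the multiset of entries of $T$ is $\lm'$, so entries with equal multiplicity correspond to columns of $\lm$ with equal length. Squareability forbids three columns of the same length, so again any two allowed entry transpositions are either equal or have disjoint support and therefore commute.

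For the direction ``not squareable implies some transformations fail to commute,'' I would prove the contrapositive by producing an explicit non-commuting pair. If $\lm$ has three rows $i_1,i_2,i_3$ of the same length, set $u=r_{(i_1,i_2)}$ and $v=r_{(i_2,i_3)}$. A direct computation shows that $uvT$ and $vuT$ place the triple of rows $(R_{i_1},R_{i_2},R_{i_3})$ of $T$ into two different cyclic orderings at positions $(i_1,i_2,i_3)$, and these orderings agree only if $R_{i_1}=R_{i_2}=R_{i_3}$. But two rows of the same length in a Latin tableau must be distinct: if two such rows were identical, the columns meeting both would contain a repeated entry, violating the Latin condition. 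Hence $uvT\neq vuT$. The case of three columns of the same length is entirely analogous, with $c_{(j_1,j_2)}$ and $c_{(j_2,j_3)}$ in place of the row transpositions and the dual observation that two columns of the same length of a Latin tableau cannot be equal.

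The only subtlety I anticipate is the reverse-direction step of promoting ``non-commuting as permutations'' to ``non-commuting as actions on $T$,'' since in general the stabilizer of $T$ in $\mathscr{S}(T)$ can be nontrivial. The distinctness of like-length rows (and like-length columns) in a Latin tableau is what rules this out; I would state this distinctness as a short preliminary observation before producing the non-commuting pair, to keep the main computation clean.
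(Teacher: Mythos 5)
Your proof is correct and takes essentially the same route as the paper's: both reduce to commutativity within each type (rows, columns, entries), observe that in the squareable case any two like-type transpositions have equal or disjoint index pairs, and in the non-squareable case exhibit the overlapping pair $r_{(i_1,i_2)}$, $r_{(i_2,i_3)}$ (or its column analogue), with the entry case handled via $\mathrm{cont}(T)=\lm'$. The one point where you go beyond the paper is in checking that the non-commuting transpositions actually act differently on the specific tableau $T$ --- via the observation that two equal-length rows of a Latin tableau cannot coincide --- a subtlety the paper's proof leaves implicit by treating commutativity at the level of group elements only; this is a worthwhile refinement but not a different argument.
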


\begin{proof}
    If $r \in S_{\text{row}}$, $c \in S_{\text{col}}$ and $s \in S_{\text{ent}}$ are elementary transformations, then clearly $r, c$ and $s$ commute with each other.  Hence it suffices to show that the elementary transformations in $S_{\text{row}}$ pairwise commute if and only if $\lm$ has no three rows of the same length; and that the elementary transformations in $S_{\text{col}}$ and $S_{\text{ent}}$ respectively pairwise commute if and only if $\lm$ has no three columns of the same length. 
    
     Suppose $\lm$ has three rows $a,b$ and $c$ of the same length.  Then $r_{(a,b)}$ and $r_{(b,c)}$ do not commute, proving the forward direction of the statement for $S_{\text{row}}.$ Conversely, suppose $r_{(i,j)}, r_{(x,y)} \in S_{\text{row}}$ are distinct elementary transformations which do not commute. Then $\{i,j\} \cap \{x,y\}$ must be nonempty. By definition of $S_{\text{row}}$, rows $i$ and $j$ have the same length; and so do rows $x$ and $y$.  Since $\{i,j\} \cap \{x,y\}$ is nonempty, rows $i,j$ and $x$ all have the same length. This proves the reverse direction of the statement about $S_{\text{row}}$.
   
    The argument for $S_{\text{col}}$ is similar. Finally, by an analogous argument, we may show that elementary transformations in $S_{\text{ent}}$ pairwise commute if and only if no three entries of $T$ appear the same number of times. To finish the proof, note that a Latin tableau $T$ of shape $\lm$ has three entries which appear the same number of times if and only if the shape $\lm$ has three columns of the same length. 
\end{proof}


    
    
    

\begin{lemma}
    \label{lem:squareable}
    Let $T$ be a Latin tableau.  If $\mathscr{G}(T)$ is isomorphic to a cube, then $\lm$ is squareable.
\end{lemma}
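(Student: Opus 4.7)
I would prove the contrapositive: if $\lm$ is not squareable, then $\mathscr{G}(T)$ is not a cube. The strategy is to exhibit $K_{3,3}$ as a subgraph of $\mathscr{G}(T)$ whenever three rows (or three columns) of $\lm$ share a length, and then to observe that $K_{3,3}$ cannot sit inside any $d$-cube.

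Step $1$: assume rows $i_1, i_2, i_3$ of $\lm$ all have length $k$. The Latin condition forces $k \ge 2$ (three rows of length $1$ would put three copies of $1$ in column $1$) and forces the three rows to be pairwise distinct as tuples (two equal rows of length $\ge 2$ would repeat an entry in column $1$). The three elementary row transpositions $r_{(i_1,i_2)}, r_{(i_1,i_3)}, r_{(i_2,i_3)}$ therefore generate a subgroup of $\mathscr{S}(T)$ isomorphic to $S_3$ that acts freely on $T$: a transposition fixing $T$ would force two of the three rows to coincide, and a $3$-cycle fixing $T$ would force all three to coincide. Hence the orbit of $T$ under this $S_3$ consists of exactly $6$ distinct tableaux, indexed by the elements of $S_3$.

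Step $2$: within this orbit, two tableaux $\sigma T$ and $\tau T$ are joined by an edge of $\mathscr{G}(T)$ coming from one of the three row transpositions precisely when $\tau \sigma^{-1}$ is a transposition, i.e.\ precisely when $\sigma$ and $\tau$ have opposite parities in $S_3$. Since each pair consisting of one even and one odd element yields a unique transposition, the $3$ even and $3$ odd elements form the two sides of a copy of $K_{3,3}$ sitting inside $\mathscr{G}(T)$.

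Step $3$: I would then rule out $K_{3,3}$ as a subgraph of any $d$-cube. The key fact is that any two distinct vertices of a $d$-cube share at most $2$ common neighbors (exactly $2$ if their Hamming distance is $2$, and $0$ otherwise), while in $K_{3,3}$ any two vertices on the same side of the bipartition share $3$ common neighbors. This contradicts $\mathscr{G}(T)$ being isomorphic to a cube. The remaining case, in which $\lm$ has three columns of the same length, is entirely symmetric: one runs the same argument with column transpositions, or equivalently, via Remark \ref{Rem:entries}, with entry transpositions on the three entries that then appear the same number of times. The main obstacle I anticipate is the uniform verification in Step $1$ that the $S_3$ action is free, especially in the boundary cases (three length-$1$ columns, or very short rows), where one has to argue directly from the Latin condition rather than from the generic column-$1$ repetition argument; once freeness is secured, the $K_{3,3}$ structure and the common-neighbor obstruction are both immediate.
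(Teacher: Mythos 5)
Your proof is correct and is essentially the paper's own argument: the paper also exhibits the Cayley graph of the $S_3$ generated by the three transpositions (on columns rather than rows, which is immaterial) as a subgraph of $\mathscr{G}(T)$ and rules out cubes via the same observation that two distinct vertices of a cube have at most two common neighbors, whereas two vertices of that subgraph --- your $K_{3,3}$ --- have three. Your explicit verification that the $S_3$-action on the orbit of $T$ is free is a slightly more careful justification of the step the paper covers with the assertion that $C$ ``acts faithfully,'' but the route is the same.
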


\begin{proof}
    Suppose $T$ is not squareable, and suppose $T$ has three columns of the same length: columns $x,y\text{ and }z$.  (If $T$ has three rows of the same length, the argument is similar.)  Let $C$ be the subgroup of the isotopy group of $T$ which acts by permuting columns $x,y \text{ and }z.$ Then $C$ is generated by the elementary transformations $c_{(x,y)}$  $c_{(x,z)}$ and $c_{(y,z)},$ and $C$ acts faithfully on $\mathscr{G}(T)$.  Hence the Cayley graph $G$ of $C$ with those generators appears as a subgraph of $\mathscr{G}(T)$.  We will prove that there is a pair of vertices in $G$ which have at least three common neighbors.  To see this, note that 
    \[c_{(y,z)}c_{(x,y)} = c_{(x,z)}c_{(y,z)} = c_{(xzy)}\]
    \[c_{(x,y)}c_{(y,z)} = c_{(x,z)}c_{(x,y)} = c_{(xyz)}\]
    Hence in the Cayley graph $G$, $c_{(x,y)}$ and $c_{(y,z)}$ have three distinct common neighbors: the identity, $c_{(y,z)}c_{(x,y)}$, and $c_{(x,y)}c_{(y,z)}$.
    However, it easy to check that no pair of vertices in a cube has more than two common neighbors. This shows that $\mathscr{G}(T)$ is not a cube, and the proof is complete.
 \end{proof}

\begin{remark}
    It is tempting to conjecture that the isotopy graph of any tableau whose shape is squareable must be a cube.  However, this is not the case, as the following example shows.
\end{remark}

Let $\lm$ be the Young diagram of shape 
    \[(2k, 2k, 2(k-1), 2(k-1), \ldots, 4,4,2,2).\]
    Then $\lm$ can be partitioned into $2 \times 2$ squares, defined by intersecting a pair of rows of the same length with a pair of columns of the same length. In a Latin tableau of shape $\lm$ each of these $2 \times 2$ squares must have entries 
    \[\young(xy,yx).\]
    for some $x,y$
    For example, when $k=3$, one tableau of shape $\lm$ is shown in Figure \ref{fig:symmetries}.
    \begin{figure}
    \[\young(654321,563412,4321,3412,21,12).\]
    \caption{A tableau with nontrivial symmetries}
    \label{fig:symmetries}
    \end{figure}
    Let $T$ be a Latin tableau of shape 
     \[\lm = (2k, 2k, 2(k-1), 2(k-1), \ldots, 4,4,2,2)\]
     for some $k$.  Then each vertex in the isotopy graph of $T$ has degree $3k$. So if $\mathscr{G}(T) \cong \mathscr{C}^d$ for some $d$, we must have $d = 3k$.
     
    Suppose we now switch every pair of rows of the same length, and then switch every pair of columns of the same length.  The overall effect is to switch the boxes in opposite corners of each $2 \times 2$ square, which does not change $T$ by the above discussion.  Hence $T$ has a nontrivial symmetry.
    
     The isotopy group of $\lm$ has order $2^{3k}$.  Since $T$ has at least one nontrivial symmetry, $\mathscr{G}(T)$ has fewer than $2^{3k}$ vertices.  So $\mathscr{G}(T)$ is not isomorphic to a cube, and no Latin tableau of shape $\lm$ has an isotopy graph isomorphic to a cube.

\begin{definition}
     Suppose columns $i$ and $j$ form a symmetric pair in a tableau $T$, and the columns have entries in $\{x,y\}$.  Then $T$ has a nontrivial symmetry $\rho = s_{(x,y)}c_{(i,j)}$. We say a symmetry $\sigma$ of a latin tableau $T$ \emph{arises from symmetric pairs of columns} if $\sigma$ is a composition of symmetries of the form given above.
\end{definition}

\begin{lemma}
\label{cayley}
Suppose $T$ is a squareable Latin tableau such that the following hold:
\begin{enumerate}
    \item The degree of a vertex in $\mathscr{G}(T)$ is $d$.
    \item The Latin tableau $T$ has no nontrivial symmetries, except perhaps some arising from symmetric pairs of columns.
\end{enumerate}
Then $\mathscr{G}(T) \cong \mathscr{C}^d$.
\end{lemma}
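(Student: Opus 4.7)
The plan is to realize $\mathscr{G}(T)$ as the Cayley graph of an elementary abelian $2$-group of rank $d$, with the standard generating set, which is precisely the $d$-cube. The squareability hypothesis will give the abelian structure, and Condition (2) will pin down the stabilizer exactly enough to find a subgroup that acts simply transitively on $\mathscr{G}(T)$.

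First I would use Lemma \ref{commuting}: since $T$ is squareable, every pair of elementary transformations commutes, so $\mathscr{S}(T)$ is generated by $a+2b$ commuting involutions. These generators are independent in the abelian group $\mathscr{S}(T)$ (transformations of different types lie in distinct factors of the product $S_{\text{row}} \times S_{\text{col}} \times S_{\text{ent}}$, and squareability rules out any nontrivial relations within a factor), so $\mathscr{S}(T) \cong (\mathbb{Z}/2)^{a+2b}$. Next, let $\alpha = 1, \ldots, p$ index the symmetric pairs of columns, with column pair $(i_\alpha,j_\alpha)$ and entry pair $\{x_\alpha,y_\alpha\}$, and set $\rho_\alpha = s_{(x_\alpha,y_\alpha)}c_{(i_\alpha,j_\alpha)}$. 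By assumption (2) the stabilizer $H$ of $T$ in $\mathscr{S}(T)$ is generated by the $\rho_\alpha$; since the $c_{(i_\alpha,j_\alpha)}$-parts of these products are distinct basis vectors of $\mathscr{S}(T)$, the $\rho_\alpha$ are independent involutions and $|H| = 2^p$.

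Now I would introduce the key subgroup. Let $G$ be generated by all the $r_{(i,j)}$, all the $c_{(i,j)}$, and those $s_{(x,y)}$ that do not arise as the entry pair of any symmetric column pair. These are $a+b+(b-p) = d$ independent generators in $\mathscr{S}(T)$, so $G \cong (\mathbb{Z}/2)^d$. The crucial step is to check $G \cap H = \{e\}$: any element of $H$ has the form $\prod_{\alpha\in A}\rho_\alpha = \bigl(\prod_{\alpha\in A}s_{(x_\alpha,y_\alpha)}\bigr)\bigl(\prod_{\alpha\in A}c_{(i_\alpha,j_\alpha)}\bigr)$, and for this to lie in $G$ the $s$-component must vanish; since the $s_{(x_\alpha,y_\alpha)}$ are distinct generators of $\mathscr{S}(T)$ (the entry pairs attached to distinct symmetric pairs are necessarily distinct, one can verify by inspecting Conditions (1) and (2) of Lemma \ref{degree}), independence forces $A=\emptyset$. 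Combining $|G|\cdot|H|=2^{a+2b}=|\mathscr{S}(T)|$ with $G\cap H=\{e\}$ gives $\mathscr{S}(T)=G\times H$, and orbit--stabilizer then shows that $G$ acts simply transitively on $\mathscr{G}(T)$, which has $2^d$ vertices.

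Finally I would identify $\mathscr{G}(T)$ with the Cayley graph of $G$ with respect to the $d$ generators chosen above. If $g_e$ denotes the chosen $G$-representative of the equivalence class of an elementary transformation $e$ (where two transformations are equivalent when they act identically on $T$), then $eT = g_e T$, and commutativity of $\mathscr{S}(T)$ propagates this: for every $g\in G$, $e(gT) = g(eT) = g(g_e T) = (g_e g)T$. Thus the edge out of the vertex $gT$ labeled by the equivalence class of $e$ lands at $(g_e g)T$, so $\mathscr{G}(T)$ is exactly the Cayley graph of $(\mathbb{Z}/2)^d$ on the $d$ standard involutive generators, i.e.\ the $d$-cube $\mathscr{C}^d$. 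The main obstacle is the transversality step $G\cap H=\{e\}$: this is where the hypothesis that \emph{all} nontrivial symmetries arise from symmetric pairs of columns is used, together with the observation that distinct symmetric pairs contribute distinct entry-transpositions, and I expect that to be the subtlest piece of bookkeeping.
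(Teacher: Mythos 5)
Your proof is correct and follows essentially the same route as the paper's: both use squareability (Lemma \ref{commuting}) to identify $\mathscr{S}(\lm)$ with $(\mathbb{Z}/2)^{a+2b}$, identify $\stab(T)$ with the subgroup generated by the symmetries $s_{(x,y)}c_{(i,j)}$ coming from symmetric column pairs, and realize $\mathscr{G}(T)$ as the Cayley graph of a rank-$d$ elementary abelian $2$-group on its standard involutive generators. The only difference is packaging: the paper treats $p=0,1,2$ as separate cases via explicit coset representatives $t_1^{a_1}\cdots t_{d-p}^{a_{d-p}}\stab(T)$, whereas you handle all cases uniformly through the direct-product decomposition $\mathscr{S}(T)=G\times H$ with $G$ a complement to the stabilizer --- a slightly cleaner formulation of the same argument.
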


\begin{proof}
    Let $\lm$ be the shape of $T$. Let $t_1,\ldots,t_d$ be elementary transformations which generate the isotopy group $\mathscr{S}(\lm)$.  Let $\mathbb{Z}_2$ be the additive group of integers modulo $2$.  Since $\lm$ is squareable, we have an isomorphism 
    \[\varphi: (\mathbb{Z}_2) ^ d \rightarrow \mathscr{S}(\lm)\]
    defined by 
    \[(a_1,a_2,\ldots,a_d) \mapsto t_1^{a_1}t_2^{a_2}\cdots t_d^{a_d}\]
    where $a_i \in \{0, 1\}$.  
    
    Suppose $T$ has no nontrivial symmetries, and consider the Cayley graph of $(\mathbb{Z}_2) ^ d$ with respect to the generating set $\{e_1,\ldots,e_d\}$
    where $e_i$ is the $d$-tuple with a $1$ in the $i^{th}$ coordinate, and $0$'s everywhere else.  
    The vertices of this graph are in obvious bijection with vertices of $\mathscr{C}^d$, and two vertices are adjacent if and only if they differ in a single coordinate.  Hence the Cayley graph of $(\mathbb{Z}_2)^d$ is a $d$-dimensional cube. 
    
    The isomorphism $\varphi$ carries the generator $e_i$ of $(\mathbb{Z}_2) ^ d$ to the elementary transformation $t_i$ in $\mathscr{S}(\lm)$.  Hence this isormophism induces an isomorphism of Cayley graphs from the cube $\mathscr{C}^d$ to the Cayley graph of $\mathscr{S}(\lm)$ with generating set $\{t_1,\ldots,t_d\}$.  But since $T$ has no nontrivial symmetries, $\mathscr{G}(T)$ is just the Cayley graph of $\mathscr{S}(\lm)$ with respect to the generating set $t_1,\ldots,t_n$.  This completes the proof.
    
    Next, suppose $T$ has at least one symmetric pair of columns. Note that since $\lm$ is squareable, $T$ has at most two symmetric pairs of columns, which must be disjoint: at most one symmetric pair of columns of length $2$, and at most one symmetric pair of columns of length $1$. 
    
    Suppose $T$ has exactly one symmetric pair of columns, consisting of columns $i$ and $j$ with entries in $\{x,y\}$. Then $T$ has a nontrivial symmetry $\rho = c_{(i,j)}s_{(x,y)}$ of order $2$, and the stabilizer $\stab(T)$ of $T$ in $\mathscr{S}(\lm)$ is the group of order 2 generated by $\rho$.
    
    For simplicity of notation, suppose the elementary transformations $c_{(i,j)}$ and $s_{(x,y)}$ correspond to $t_{d-1}$ and $t_d$ under the isomorphism $(\mathbb{Z}_2)^d \rightarrow \mathscr{S}(\lm)$ described above.
    
    Then $\mathscr{S}(T)/\stab(T)$ has a complete set of distinct coset representatives of the form 
    \[t_1^{a_1}t_2^{a_2}\cdots t_{d-1}^{a_{d-1}} \stab(T)\]
    as $a_1,\ldots,a_{d-1}$ range through $\{0,1\}$.
    
    For $i \in \{1,\ldots,d-1\}$, the elementary transformation $t_i$ acts on 
     \[t_1^{a_1}t_2^{a_2}\cdots t_{d-1}^{a_{d-1}} \stab(T)\]
     by changing $a_i$ either from $0$ to $1$ or from $1$ to $0$; while the elementary transformation $t_d$ acts by changing the value of $a_{d-1}$.  Hence two vertices of the coset graph are identical if and only if the tuples $(a_1,\ldots,a_{d-1})$  differ in exactly one coordinate, and the isotopy graph is a cube as desired. 
     
     If $T$ has instead two symmetric pairs of columns, the argument is similar. Here, suppose that 
     $\rho = c_{(i,j)}s_{(x,y)}$
      and $\rho' = c_{(k,\ell)}s_{(u,v)}$ are the symmetries corresponding to the two symmetric pairs of columns, and suppose for simplicity of notation that $c_{(i,j)} = t_{d-3},$ $s_{(x,y)} = t_{d-1},$  $c_{(k,\ell)} = t_{d-2}$ and $s_{(u,v)} = t_d.$
     Then the argument is analogous to the case with a single symmetric pair, except that our set of coset representatives is given by
      \[t_1^{a_1}t_2^{a_2}\cdots t_{d-2}^{a_{d-2}} \stab(T).\]
      The transformations $t_1,\ldots,t_{d-2}$ act by toggling the exponents $a_i$ between $0$ and $1$, while $t_{d-1}$ and $t_{d}$ change the value of $a_{d-3}$ and $a_{d-2}$ respectively. Once again, the isotopy graph is a cube, and the proof is complete in this case.
\end{proof}

We know from Figure \ref{fig:symmetries} that not every squareable Young diagram has a filling that has a cubical isotopy graph. In the next Theorem we complete the characterization of when squareable Latin tableaux have an isotopy graph being a cube. 

\begin{theorem}\label{cube_criterion}
    For $T$ a Latin tableau of shape $\lm$, the graph $\mathscr{G}(T)$ is a cube if and only if both of the following hold.
    \begin{enumerate}
        \item The shape $\lm$ is squareable.
        \item The tableau $T$ has no nontrivial symmetries, except those arising from symmetric pairs of columns.
    \end{enumerate}
\end{theorem}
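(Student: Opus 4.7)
The plan is to prove the two directions separately. The backward direction is essentially handled by Lemma \ref{cayley}, so the bulk of the work lies in the forward direction, which I will attack by an orbit--stabilizer count. For the backward direction, assume that $\lm$ is squareable and that every nontrivial symmetry of $T$ arises from a symmetric pair of columns. By Theorem \ref{count_pairs}, the common vertex degree of $\mathscr{G}(T)$ equals $d = a + 2b - p$, which is precisely the $d$ required by the hypotheses of Lemma \ref{cayley}. Applying that lemma gives $\mathscr{G}(T) \cong \mathscr{C}^d$.

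For the forward direction, suppose $\mathscr{G}(T) \cong \mathscr{C}^d$ for some $d$. Condition (1) is immediate from Lemma \ref{lem:squareable}. To obtain condition (2), I would argue as follows. Since $\lm$ is squareable, Lemma \ref{commuting} tells us that all elementary transformations commute, so $\mathscr{S}(\lm)$ is generated by commuting involutions and is therefore an elementary abelian 2-group of rank equal to the total number of elementary transformations, namely $a + 2b$. Hence $|\mathscr{S}(\lm)| = 2^{a+2b}$. By orbit--stabilizer applied to the transitive action of $\mathscr{S}(\lm)$ on $\mathscr{G}(T)$, the number of vertices equals $2^{a+2b}/|\stab(T)|$. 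Since $\mathscr{G}(T)$ is a $d$-cube with $d = a+2b-p$ by Theorem \ref{count_pairs}, it has $2^{a+2b-p}$ vertices, forcing $|\stab(T)| = 2^p$.

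To close the argument, I would show that the symmetries arising from symmetric pairs of columns already fill out a subgroup of $\stab(T)$ of order $2^p$, so that equality of orders forces $\stab(T)$ to consist of exactly those symmetries. Each symmetric pair yields an element $\rho_k = c_{(i_k,j_k)} s_{(x_k,y_k)} \in \stab(T)$, and as observed in the proof of Lemma \ref{cayley}, a squareable tableau has at most two such pairs, involving disjoint columns and disjoint entries. Thus the $\rho_k$ are independent elements of the elementary abelian group $\mathscr{S}(\lm)$ and generate a subgroup of order $2^p$ inside $\stab(T)$. Since $|\stab(T)| = 2^p$, this subgroup is all of $\stab(T)$, and every symmetry of $T$ is a composition of the $\rho_k$, as required.

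The main obstacle will be the bookkeeping around the convention from Remark \ref{coset}: several elementary transformations may act identically on $T$, so the degree of a vertex is not simply the number of generators of $\mathscr{S}(\lm)$. This is precisely what is measured by the correction term $p$ in Theorem \ref{count_pairs}, and it is exactly the same $p$ that records the size of the stabilizer contributed by symmetric pairs. The clean cancellation between vertex count and degree rests on these two roles of $p$ matching up, which is what makes the orbit--stabilizer calculation give the characterization cleanly.
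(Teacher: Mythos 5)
Your proposal is correct and follows essentially the same route as the paper's proof: sufficiency via Lemma \ref{cayley}, necessity of squareability via Lemma \ref{lem:squareable}, and necessity of condition (2) via the orbit--stabilizer count $|\mathrm{vert}(\mathscr{G}(T))| = 2^{a+2b}/|\stab(T)|$ combined with the degree formula $d = a+2b-p$ and the observation that the symmetric-pair symmetries generate a subgroup of $\stab(T)$ of order $2^p$. The paper runs the final step as an inequality ($|\stab(T)|\geq 2^p$ with equality iff all symmetries arise from symmetric pairs) rather than your ``known subgroup fills the stabilizer'' phrasing, but the content is identical.
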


\begin{proof}
    From Lemma \ref{cayley}, we know that if the above conditions hold, then $\mathscr{G}(T)$ is a cube.  From Lemma \ref{lem:squareable}, we know that the first condition is necessary for $\mathscr{G}(T)$ to be a cube. It remains to prove that if $T$ has a nontrivial symmetry which does not arise from symmetric pairs of columns, then $\mathscr{G}(T)$ is not a cube. Let $\lm$ be a squareable Young diagram, and let $d$ be the number of elementary transformations acting on $T$.
    Since $\lm$ is squareable, the isotopy group of $\lm$ is isomorphic to $(\mathbb{Z}_2)^d$, with each factor generated by a distinct elementary transformation.
    
    Let $d'$ be the degree of a vertex in $\mathscr{G}(T)$. 
    Then by Lemma \ref{count_pairs}, $d' = d - p$, where $p$ is the number of symmetric pairs of columns in $T$. 
    Note that since $\lm$ is squareable, $T$ has at most two symmetric pairs of columns, which must be disjoint: at most one symmetric pair of columns of length $2$, and at most one symmetric pair of columns of length $1$. 
    
    Suppose columns $i$ and $j$ form a symmetric pair in $T$, with entries in $\{x,y\}$.  Then $T$ has a nontrivial symmetry $\rho = c_{(i,j)}s_{(x,y)}$ of order $2$.  If $T$ has a second symmetric pair, then that pair contributes a second nontrivial symmetry $\rho'$, and $\rho$ and $\rho'$ generate a subgroup of $\mathscr{S}(T)$ which has order $4.$   Hence if $p$ is the number of symmetric pairs of columns in $d$, we have $|\stab(T)| \geq 2^p$, with equality holding if and only if all the symmetries of $T$ arise from symmetric pairs of columns.
    
    The number of vertices in $\mathscr{G}(T)$ is given by
    \[|\text{vert}(\mathscr{G}(\lm))| = \frac{|\mathscr{S}(\lm)|}{|\stab(T)|} = \frac{2^d}{|\stab(T)|}.\]
    Hence by the discussion in the previous paragraph,
    \[|\text{vert}(\mathscr{G}(\lm)| \leq \frac{2^d}{2^p} = 2^{d-p} = 2^{d'}\]
    with equality if and only if the only symmetries of $T$ are those arising from symmetric pairs of columns. 
   Now every vertex of $\mathscr{G}(T)$ has degree $d'$.  Hence if $\mathscr{G}(T)$ is a cube, then $\mathscr{G}(T)$ is in particular a $d'$-dimensional cube, so we must have $|\text{vert}(\mathscr{G}(\lm))| = 2^{d'}$.  Hence if $T$ has nontrivial symmetries which do not arise from symmetric pairs of columns, $\mathscr{G}(T)$ cannot be a cube, and the proof is complete.
\end{proof}

\begin{theorem}\label{theorem:cube}
    For every positive integer $d$, there exists a Latin tableau $T$ such that $\mathscr{G}(T) \cong \mathscr{C}^d$.
\end{theorem}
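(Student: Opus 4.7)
The plan is to construct, for each positive integer $d$, an explicit Latin tableau $T_d$ of a squareable shape whose isotopy graph is a $d$-cube, and then invoke Theorem \ref{cube_criterion}.

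For the base case $d = 1$, I would take $\lambda_1 = (2)$ and $T_1 = \young(12)$. The shape $(2)$ is trivially wide and squareable, and its two length-one columns form a symmetric pair via condition (1) of Lemma \ref{degree}, giving $p = 1$; Theorem \ref{count_pairs} then yields vertex degree $0 + 2 - 1 = 1$. The only nontrivial symmetry of $T_1$ is $c_{(1,2)} s_{(1,2)}$, which arises from that symmetric pair, so Theorem \ref{cube_criterion} gives $\mathscr{G}(T_1) \cong \mathscr{C}^1$.

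For $d \geq 2$, set $k = \lceil d/2 \rceil$ and take $\lambda_d = (2k+1, 2k-1, \ldots, 3, 1)$, the odd staircase with $k+1$ rows. I would first verify that $\lambda_d$ is squareable --- its row lengths are pairwise distinct, and its column lengths $k+1, k, k, k-1, k-1, \ldots, 1, 1$ have no value appearing three or more times --- and that $\lambda_d$ is wide, by checking that the dominance inequality $\lambda_d \geq \lambda_d'$ holds via the explicit partial-sum formulas and that the inequality is preserved after deleting any row. For any Latin filling $T$ of $\lambda_d$ we then have $a = 0$ and $b = k$, so by Theorem \ref{count_pairs} the vertex degree of $\mathscr{G}(T)$ is $2k - p$, where $p \in \{0, 1, 2\}$ counts symmetric column pairs; such pairs may arise only from the length-one columns $2k, 2k+1$ (via condition (1)) or the length-two columns $2k-2, 2k-1$ (via condition (2)) of Lemma \ref{degree}.

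I would then construct $T_d$ so that $p = 2k - d$. When $d = 2k$ is even, take $p = 0$ by ensuring that neither condition of Lemma \ref{degree} holds; when $d = 2k - 1$ is odd, take $p = 1$ by arranging for condition (1) to hold --- for instance, placing $2k$ and $2k+1$ in the last two cells of the first row --- while avoiding the anti-diagonal pattern required by condition (2). Prototypes for small $d$, such as $T_2 = \young(231,1)$, $T_3 = \young(21345,321,1)$, and $T_4 = \young(23451,312,1)$, guide the general construction; a convenient recipe is to let the first row be a cyclic shift of $(1, 2, \ldots, 2k+1)$ and fill the subsequent rows inductively so that the Latin condition is preserved.

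The main obstacle is verifying that $T_d$ has no nontrivial symmetry beyond the one (or none) already accounted for. Because $\lambda_d$ is squareable, $\mathscr{S}(\lambda_d)$ is isomorphic to $(\mathbb{Z}_2)^{2k}$, generated by $k$ column transpositions and $k$ entry transpositions, so any stabilizer is an elementary abelian $2$-subgroup. I would rule out unintended stabilizers by case analysis: the bottom row of $\lambda_d$ is a single box containing $1$, so any stabilizer must fix that entry, which severely limits which generators may appear in the product; combining this with the rigidity of the middle rows and the cyclic structure of the filling allows one to exclude every candidate compound symmetry of the type $c_i c_j s_i s_j$ that caused trouble in naive attempts. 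Once this verification is complete, Theorem \ref{cube_criterion} immediately yields $\mathscr{G}(T_d) \cong \mathscr{C}^d$, completing the proof.
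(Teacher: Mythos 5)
Your overall strategy is the right one and, in outline, matches the paper's: build an explicit squareable tableau with the correct vertex degree and no unintended symmetries, then apply Theorem \ref{cube_criterion} (equivalently Lemma \ref{cayley}). Your choice of shape is genuinely different --- you use the odd staircase $(2k+1,2k-1,\ldots,3,1)$ for both parities and adjust the dimension downward by one via a symmetric pair of columns when $d$ is odd, whereas the paper uses the shape $(d,d-2,\ldots,4,2)$ with $p=0$ for even $d$ and handles odd $d$ by duplicating the top row to create a pair of equal-length rows. Your small cases ($T_1$ through $T_4$) check out, and the bookkeeping with $a$, $b$, $p$ and Theorem \ref{count_pairs} is correct.

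The gap is that the argument stops exactly where the real work begins. You never pin down the filling $T_d$ for general $d$: ``let the first row be a cyclic shift of $(1,\ldots,2k+1)$ and fill the subsequent rows inductively so that the Latin condition is preserved'' is not a construction --- different completions exist, and whether the stabilizer is trivial (or exactly the designated $c\,s$ symmetry in the odd case) depends entirely on which one you take. Note also that since the Wide Partition Conjecture is open, wideness of the odd staircase does not by itself guarantee a filling exists; you must exhibit one for every $k$. More importantly, the verification that $\stab(T_d)$ contains nothing beyond the intended symmetry is the technical heart of the proof, and you replace it with the assertion that ``the rigidity of the middle rows and the cyclic structure of the filling allows one to exclude every candidate.'' The dangerous elements are precisely the products $c_{(i,j)}s_{(x,y)}$ and longer products of commuting transpositions, and cyclic-type fillings are notorious for admitting such symmetries (compare Figure \ref{fig:symmetries}, where every filling of a squareable shape is forced to have one). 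The paper's proof succeeds because its filling is fully explicit --- the $k^{th}$ row from the bottom reads $2k-1,2k-3,\ldots,1,2,\ldots,2k$ --- which lets it argue concretely that no pair of equal-length columns carries entries that an available entry transposition could restore. To close your argument you would need an equally explicit filling of the odd staircase and a case analysis of the (at most $2^{2k}$, but effectively few) candidate stabilizer elements showing that only the designated one survives.
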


\begin{proof}

     For $d \leq 3$, we give specific tableaux with the desired isotopy graphs.  We then give a general construction for all larger values of $d$.

    For $d=0$, we take the tableau 
    \[T_0=\young(1).\]
    The isotopy graph of $T_0$ consists of a single vertex, as desired. 
    
    For $d=1$, we take 
    \[T_1=\young(12).\]
    Here $s_{(1,2)}$ and $c_{(1,2)}$ act identically on $T$, and $\mathscr{G}(T)$ has a single edge, labeled with both $s_{(1,2)}$ and $c_{(1,2)}$. So $\mathscr{G}(T) \cong \mathscr{C}^1$.
    
    For $d=2$, we take 
    \[T_2=\young(312,12).\]
    The elementary transformations acting on $T_2$ are $c_{(1,2)}$ and $s_{(1,2)}$, and it is easy to check that composing them does not give a symmetry of $T_2$.
    
     If $d = 3$, we can check that the tableau \[T_3=\young(2431,4312,312,12)\]
    has the desired isotopy graph.  The elementary transformations are $r_{(1,2)}$, $c_{(1,2)}$ and $s_{(1,2)}$.
    
    We now give a construction for all higher values of $d$.
    By the previous lemma, it is enough to find a squareable Young diagram with $d$ elementary transformations in its isotopy group, and no nontrivial symmetries.
    
   \begin{figure}[h]
        \centering
        \begin{subfigure}[b]{0.4\textwidth}
        \young(75312468,531246,3124,12)
        \caption{The tableau $T_8$}
        \label{fig:lamda_8}
        \end{subfigure}
        \begin{subfigure}[b]{0.4\textwidth}
        \young(87531246,75312468,531246,3124,12)
        \caption{The tableau $T_9$}
        \label{fig:lambda_9}
        \end{subfigure}
        \caption{Tableaux with isotopy graphs isomorphic to cubes.}
    \end{figure}
   We first address the case where $d$ is even.  For even $d \geq 4$, let $\lm_d$ be the Young diagram with $\frac{d}{2}$ rows and $d$ columns, such that each row of $\lm_d$ has two fewer boxes than the one above it. Then $T_d$ is the filling of $\lm_d$ such that the $k^{th}$ row from the \emph{bottom} has entries
    \[2k-1, 2k-3, \cdots,3, 1, 2, 4, \ldots, 2k-2, 2k.\]
    (Here row $\frac{d}{2}$ is the \emph{first} row from the bottom, and so has entries $1,2$.)
    
   Note that $\lm_d$ is squareable, with no two rows of the same length and $\frac{d}{2}$ pairs of columns of the same length.  Hence there are $d$ elementary transformations that generate the isotopy group of $\lm_d$, of which $\frac{d}{2}$ act on columns and $\frac{d}{2}$ act on entries.
    
    Let $d \geq 4$ be even, and let $\rho$ be an element of $\mathscr{S}(T_d)$ which acts trivially on the tableau $T_d$.
    We can write $\rho$ in the form $\rho = \rho_{\text{ent}}\rho_{\text{col}}$ where $\rho_{\text{ent}}$ acts on $T_d$ by permuting entries, and $\rho_{\text{col}}$ acts by permuting columns.
    
    Suppose $\rho_{\text{col}}$ switches columns $j$ and $j+1$ in $T_d$.  Then $\rho_{ent}$ must permute the entries of columns $j$ and $j+1$, so as to restore each entry to its original position.  This is only possible if, for each row $i$ which intersects column $j$, the boxes $T_d$ in positions $(i,j)$ and $(i,j+1)$ are consecutive integers.  This does not hold for any column of $T_d$, by construction.  
    
    Hence $\rho_{\text{col}}$ must be the identity, and $\rho$ acts on $T_d$ strictly by permuting entries.  Since $\rho$ acts trivially on $T_d$, it follows that $\rho$ is the identity, and $T_d$ has no trivial symmetries, as desired.
    
    Now, let $d$ be odd, and suppose $d > 3$.  Then $d-1$ is an even number which is greater than or equal to $4$.  
    Let $T_d$ by the tableau whose first row has entries
    \[d-1, d-2, d-4, \cdots, 3, 1, 2, 4, \cdots d-5, d-3\]
    and whose remaining rows form a copy of the tableau
    $T_{d-1}$ defined above for the even case.  
    
    Let $\lm_d$ be the shape of $T_d$.  Note that $\lm_{d}$ has $(d-1)/2$ pairs of columns of the same length, and a single pair of rows of the same length.  This gives $d$ distinct elementary transformations acting on $\lm_d$.  We must show that the tableau $T_d$ has no nontrivial symmetries.
    
    Let $\rho \in \mathscr{S}(\lm_d)$, and suppose $\rho$ acts trivially on $T_d$.  As in the even case, we can write $\rho$ as a product $\rho = \rho_{\text{col}}\rho_{\text{ent}}\rho_{\text{row}}$, where $\rho_{\text{row}}$ is either $r_{(1,2)}$ or the identity.
    
    Note that $\rho_{\text{col}}$ permutes the first $d-3$ columns of $T_d$ amongst themselves, and similarly $\rho_{\text{ent}}$ permutes entries in the set $\{1,2,\ldots,d-3\}$ amongst themselves.  Hence $\rho$ acts by an isotopy on the copy of $T_{d-3}$ which is obtained by removing the first two rows of $T_d$.  Since $T_{d-3}$ has no nontrivial symmetries, it follows that $\rho_{\text{col}}$ is either the identity or the elementary transformation $c_{(d-2,d-1)}$.  Similarly $\rho_{\text{ent}}$ is either the identity or the elementary transformation $s_{(d-1,d-2)}$.
    
    So $\rho = c_{(d-2,d-1)}^{a_1}
                s_{(d-1,d-2)}^{a_2}
                r_{(1,2)}^{a_3}$ where 
    $a_1,a_2,a_3 \in \{0,1\}$.
    If $a_3 = 1$, then $\rho(T_d)$ has the same entry in positions $(1,1)$ and $(2,2)$.  This is a contradiction, since $\rho$ acts trivially on $T_d$.  If $a_2 = 1$, then $\rho(T_d)$ contains a total of two entries equal to $d-1$ in its first two columns, compared to one such entry for $T_d,$ again a contradiction. So $a_2 = a_3 = 0$.  Since a single elementary transformation cannot act trivially, the only possibility is that $\rho$ is the identity.  Hence $T_d$ has no nontrivial symmetries for odd values of $d$, and the proof is complete.
\end{proof}

\appendix

\section{Young diagrams with triangles in their isotopy graphs}\label{Appendix}

In this appendix, we list all Young diagrams whose isotopy graphs contain triangles. For each shape $\lm$, we give an example of a filling $T$ of $\lm$ such that $\mathscr{S}(T)$ contains a triangle.  For the first two examples, the remaining two vertices of the triangle are obtained by switching the last two rows and last two columns of the tableau, respectively.  For the remaining examples, we highlight the four key boxes described in Proposition \ref{prop:triangle}. When ellipses appear in the diagram, the indicated rows may be extended as much as we like to give a Young diagram, and the additional boxes filled with entries in an obvious way to give a Latin tableau, without impacting the presence of a triangle in the isotopy graph. 

By Theorem \ref{clique_number}, each triangle in an isotopy graph is part of a maximal 4-clique. We can see how such a four-clique arises in each example. Let $r$ be the elementary transformation that switches the two rows containing highlighted boxes, and let $c$ be the transformation that switches the two columns containing these boxes. In each case, applying the transformation $rc$ to the tableau is equivalent to applying an elementary transformation $s$ that switches two entries.  We have a triangle with vertices $T$, $rT$ and $cT$ by Proposition \ref{prop:triangle}. The tableaux $sT$ gives the fourth vertex of a 4-clique. All remaining edges can be easily filled in, using the fact that $r,c$ and $s$ commute.

In some cases, the given tableaux are contained in additional triangles in the isotopy graph, which do not arise for the four highlighted boxes.  Each of these triangles is part of a $4$-clique, which can be found in the same way as above.

As a final note, we have given only one possible filling for each Young diagram.  There may be other fillings of the same shape that produce structurally different isotopy graphs, which also contain triangles. 

\vspace{0.25in}

\[\ytableaushort{3412{\none[\ldots]}, 4321, 12, 21}
\qquad 
\ytableaushort{534{\none[\ldots]}12{\none[\ldots]}, 34{\none[\ldots]}{\none[\ldots]}21, 12, 21}
\]

\hspace{0.25in}

\[\ytableaushort{231, 312, 12}
    *[*(lightgray)]{2,2}
    \qquad
    \ytableaushort{3412,4321,123,21}
    *[*(lightgray)]{2,2}
    \qquad
    \ytableaushort{213, 321}
    * [*(lightgray)]{1+2,1+2}
    \qquad
     \ytableaushort{213, 321, 1}
    * [*(lightgray)]{1+2,1+2}
    \qquad
    \ytableaushort{2143, 3412, 123}
    *[*(lightgray)]{1+2,1+2}
\]

\hspace{0.25in}

\[\ytableaushort{3412, 4321}
    * [*(lightgray)]{2+2,2+2}
    \qquad
    \ytableaushort{3412, 4321, 1}
    *[*(lightgray)]{2+2,2+2}
    \qquad 
    \ytableaushort{3412, 4321, 12}
    *[*(lightgray)]{2+2,2+2}
    \qquad
    \ytableaushort{3412,4321,21,1}
    *[*(lightgray)]{2+2,2+2}
\]

\hspace{0.25in}

\[\ytableaushort{3412{\none[\ldots]}, 12, 21}
    *[*(lightgray)]{0,2,2}
    \qquad
    \ytableaushort{132{\none[\ldots]},213,321}
    *[*(lightgray)]{0,1+2,1+2}
    \qquad
    \ytableaushort{4321{\none[\ldots]},213,321,1}
    *[*(lightgray)]{0,1+2,1+2}
    \qquad
    \ytableaushort{4213{\none[\ldots]},2341,1432,312}
    *[*(lightgray)]{0,1+2,1+2}
\]

\hspace{0.25in}

\[\ytableaushort{1234{\none[\ldots]}, 3412, 4321}
    *[*(lightgray)]{0,2+2,2+2}
    \qquad
    \ytableaushort{1234{\none[\ldots]}, 3412, 4321, 21}
    *[*(lightgray)]{0, 2+2, 2+2}
    \qquad
    \ytableaushort{52341{\none[\ldots]}, 3412, 4321, 21, 1}
    *[*(lightgray)]{0, 2+2, 2+2}
    \qquad
    \ytableaushort{563412{\none[\ldots]}, 3412, 4321, 21, 12}
    *[*(lightgray)]{0, 2+2, 2+2}
\]

\hspace{0.25in}

\[\ytableaushort{563412{\none[\ldots]}, 3412, 4321, 21, 12}
  *[*(lightgray)]{0, 2+2, 2+2}
  \qquad\ytableaushort{34521{\none[\ldots]},4321   {\none[\ldots]} ,12,21}
  *[*(lightgray)]{0, 0, 2, 2}
  \qquad
  \ytableaushort{1234,2143,3412,4321}
    *[*(lightgray)]{0,0,2+2,2+2}
  \qquad
  \ytableaushort{52341{\none[\ldots]},2143{\none[\ldots]},3412     ,4321}
    *[*(lightgray)]{0,0,2+2,2+2}
\]

\hspace{0.25in}

\[\ytableaushort{52341{\none[\ldots]},2143{\none[\ldots]},3412,4321, 1}
    *[*(lightgray)]{0,0,2+2,2+2}
    \qquad
   \ytableaushort{25341, 51432, 3412, 4321, 12}
   *[*(lightgray)]{0,0,2+2, 2+2}
   \qquad
   \ytableaushort{563412{\none[\ldots]}, 2143{\none[\ldots]}, 3412, 4321, 12}
   *[*(lightgray)]{0,0,2+2, 2+2}
\]

\hspace{0.5in}
    
\[\ytableaushort{653421{\none[\ldots]},52431{\none[\ldots]},3412,4321,21,1}
    *[*(lightgray)]{0,0,2+2,2+2}
    \qquad
    \ytableaushort{653421,564312,3412,4321,21,12}
    *[*(lightgray)]{0,0,2+2,2+2}
    \qquad
    \ytableaushort{6534721{\none[\ldots]},564321{\none[\ldots]},3412,4321,21,12}
    *[*(lightgray)]{0,0,2+2,2+2}
\]

\bibliographystyle{plain}

\bibliography{bib}

\end{document}